\numberwithin{equation}{section}
\newcommand{\R}{\mathbb{R}}
\newcommand{\N}{\mathbb{N}}
\newcommand{\E}{\mathbb{E}}
\renewcommand{\P}{\mathscr{P}}
\newcommand{\F}{\mathcal{F}}
\newcommand{\1}{\mathbbm{1}}
\newtheorem{thm}{Theorem}[section]
\newtheorem{lem}[thm]{Lemma}
\newtheorem{example}[thm]{Example}
\newtheorem{assumption}[thm]{Assumption}
\begin{document}
\title{
Unconditionally positivity-preserving approximations of the A\"{i}t-Sahalia type model: Explicit Milstein-type schemes
\thanks{
This work was supported by Natural Science Foundation of China (12071488, 12371417, 11971488) and all authors contributed equally to this work.}
}
\author[a]{Yingsong Jiang\protect\footnotemark[4]}
\author[a]{Ruishu Liu\thanks{Corresponding author:
chicago@mail.ustc.edu.cn
}}
\author[a]{Xiaojie Wang\protect\footnotemark[4]}
\author[b]{Jinghua Zhuo\protect\footnotemark[4]}

\affil[a]{School of Mathematics and Statistics, HNP-LAMA, Central South University, Changsha, Hunan, China}
\affil[b]{Department of Economics, University of Warwick, Coventry CV4 7AL, United Kingdom}



\date{}
\maketitle	

\footnotetext[4]{Contributing author(s):
\begin{minipage}[t]{\linewidth}
    yingsong@csu.edu.cn
    \\
    x.j.wang7@csu.edu.cn
    \\
    Jinghua.Zhuo@warwick.ac.uk
\end{minipage}
}

\begin{abstract}

The present article aims to design and analyze efficient first-order strong schemes for a generalized A\"{i}t-Sahalia type model arising in mathematical finance and evolving in a positive domain $(0, \infty)$, which possesses a diffusion term with superlinear growth and a highly nonlinear drift that blows up at the origin.
Such a complicated structure of the model unavoidably causes essential difficulties in the construction and convergence analysis of time discretizations.
%
By incorporating implicitness in the term 
$\alpha_{-1} x^{-1}$ and a corrective mapping 
$\Phi_h$ in the recursion, 
we develop a novel class of explicit and unconditionally positivity-preserving (i.e., for any step-size $h>0$)  Milstein-type schemes for the underlying model. 
In both non-critical and general critical cases, 
we introduce a novel approach to analyze mean-square error bounds of the novel schemes,
without relying on a priori high-order moment bounds of the numerical approximations.
The expected order-one mean-square convergence is attained for the proposed scheme.
The above theoretical guarantee can be used to justify the optimal complexity of the Multilevel Monte Carlo method. 
%
Numerical experiments are finally provided to verify the theoretical findings.

	\par 
	{\bf AMS subject classification:} {\rm\small 60H35, 60H15, 65C30.}	\\
	
	{\bf Keywords:} 
 A\"{i}t-Sahalia type model;
 Unconditionally positivity-preserving;
 Explicit Milstein-type scheme;
 Order-one mean-square convergence.
\end{abstract}

\section{Introduction}

Stochastic differential equations (SDEs) have found extensive applications in various disciplines such as finance, biology, chemistry, physics and engineering.
Since analytical solutions for most nonlinear SDEs are typically not accessible, there has been a growing interest in examining their numerical counterparts.
The past few decades have witnessed a lot of progress in this area, where the traditional setting imposed the global Lipschitz condition on the coefficient functions of SDEs \cite{platen1999introduction,milstein2004stochastic}.
A natural question then arises: what if the restrictive global Lipschitz condition is violated?
In 2011, Hutzenthaler, Jentzen and Kloeden \cite{hutzenthaler2011strong}  gave a negative answer to the question in the sense that 
the popularly used Euler-Maruyama method produces divergent numerical approximations when used to solve a large class of SDEs with super-linearly growing coefficients. 
Therefore, it is highly non-trivial to design and analyze numerical SDEs in the absence of the Lipschitz regularity of coefficients. 
Indeed, most nonlinear SDE models from computational finance not only have non-globally Lipschitz coefficients, but also have positive solutions, which
motivates the positivity-preserving numerical approximations. Following this direction, many researchers recently proposed and analyzed various positivity-preserving schemes for nonlinear SDEs with non-globally Lipschitz coefficients 
\cite{dereich2012euler,neuenkirch2014first,
Chassagneux_J._M.:financialSDEs_non-Lipschitz_SIAM16,
Mao:AitS._PositivityPreseved_BIT23,
Halidias_S.:AitS._positivity_DCDS23,
Higham_Mao_Szpruch:non-negativity_finance_DCDS13,
Alfonsi:CIR_SPL13,
Liu_Cao_Wang:AS_unconditionally_arkiv24,
Wang:mean-square_ACM23,
Wang:theta_mean-square_BIT20,
Lei_Gan_Chen:logarithmic_transformed_truncated_JCAM23,
Cai_Guo_Mao:Lotka–Volterra_Calcolo23,
Li_Cao:Lotka–Volterra_CNSNS23,
Mao_Wei_W.:Lotka–Volterra_JCAM21,
Chen_Gan_Wang:SIS_JCAM21,
Hong_Ji_Wang_Zhang:Lotka–Volterra_BIT22,Lord2024convergence,liu2023higher,emmanuel2021truncated,yang2021first,yang2023strong,
Yi_Hu_Zhao:positive_logarithmic2021}, to just mention a few.

As one of typical nonlinear SDEs with non-globally Lipschitz coefficients, the A\"{i}t-Sahalia interest rate model was widespreadly used in finance and economics, which was initially introduced by A\"{i}t Sahalia \cite{AitSahalia1996} and later expanded by \cite{szpruch2011numerical} to a general version, given by
\begin{equation}\label{eq:ait-sahalia_general[1AS24]}
     d X_t = ( \alpha_{-1} X_t^{-1} - \alpha_0 + \alpha_1 X_t - \alpha_2 X_t^r ) dt + \sigma X_t^{\rho} d W_t, \ t >0, \quad  X_0 = x_0 > 0.  
\end{equation}
Here $ \alpha_{-1}, \alpha_0, \alpha_1, \alpha_2 >0$, $r, \rho >1$ such that $r+1 \geq 2\rho$, and $\{W_t\}_{t \in [0, T]}$ is a standard Brownian motion.
As asserted by \cite{szpruch2011numerical}, the considered model \eqref{eq:ait-sahalia_general[1AS24]} 
is well-posed in the domain $(0, \infty)$ and admits a unique positive solution.
Clearly, the model has a polynomially growing drift that blows up at the origin and a diffusion term with superlinear growth.
These facts cause essential difficulties for the analysis of numerical approximations (see \cite{szpruch2011numerical} for more comments).
Due to the polynomially growing coefficients, the widely used Euler–Maruyama method, also known to be not positivity preserving, is apparently not a good candidate scheme for the model \eqref{eq:ait-sahalia_general[1AS24]}.
%

In \cite{szpruch2011numerical}, the authors discretized \eqref{eq:ait-sahalia_general[1AS24]} by using the backward Euler (BE) method and obtained positivity-preserving approximations. A strong convergence analysis was conducted there for the BE scheme applied to the model \eqref{eq:ait-sahalia_general[1AS24]} with $r+1 > 2 \rho$, but without any convergence rate disclosed. 
This gap was filled by \cite{Wang:theta_mean-square_BIT20}, where the authors achieved the mean-square convergence rate of order $1/2$ for stochastic theta methods applied to the A\"{i}t-Sahalia type model under the condition $r+1 \geq 2\rho$, also covering the general critical case $r+1 = 2\rho$. 
Recently, a kind of implicit Milstein method
was proposed for the A\"{i}t-Sahalia type model
in \cite{Wang:mean-square_ACM23}, where 
a mean-square convergence rate of order $1$ was successfully recovered.

Nevertheless, the implementation of implicit methods is computationally expensive as one needs to solve an implicit algebraic equation in each step.
In order to reduce computational costs, some researchers attempt to find explicit positivity-preserving  schemes.
Based on a Lamperti-type transformation, the authors of \cite{Halidias_S.:AitS._positivity_DCDS23} proposed an explicit, positivity-preserving  scheme with first-order strong convergence, for \eqref{eq:ait-sahalia_general[1AS24]} in the special critical case $r = 2$, $\rho = \tfrac{3}{2}$, which, however, only works for the special case.
The recent publication \cite{Mao:AitS._PositivityPreseved_BIT23} offered a positivity-preserving truncated Euler method for the non-critical case $r+1 > 2 \rho$, with a mean-square convergence rate of nearly $1/4$. 
More recently, the authors of \cite{Liu_Cao_Wang:AS_unconditionally_arkiv24} constructed an explicit, unconditionally positivity-preserving Euler-type method, which was proved to achieve a mean-square convergence of order $1/2$ for the general case $r+1 \geq 2\rho$.


In this paper, we aim to introduce a novel class of explicit Milstein-type schemes, which is easily implementable, unconditionally positivity-preserving and strongly convergent with order one.
On a uniform mesh within the interval $[0,T]$ with a time step-size $h = \frac{T}{N}, N \in \N$, we propose the following time stepping scheme:
\begin{equation}\label{eq:numer._in_intro.[1AS24]}
\left\{
\begin{array}{ll}
        Y_{n+1}
       =
       \Phi_h (Y_n) 
       +  (\alpha_{-1} Y_{n+1}^{-1} - \alpha_0 + \alpha_1 \Phi_h (Y_n) + f( \Phi_h (Y_n) )   ) h
       + g( \Phi_h (Y_n) ) \Delta W_n  \\
       \qquad \qquad \qquad \qquad
       + \tfrac{1}{2} ( |\Delta W_n|^2 - h  ) \hat{g}( \Phi_h (Y_n) ) ,  
       \ \
       n \in \{0,1,2,...,N-1\} , 
       \\
        Y_0  = x_0,
\end{array}
\right.
\end{equation}
where we denote $f(x) := - \alpha_2 x^r , g(x) := \sigma x^{\rho},  \hat{g}(x) := g'(x)g(x) = \rho \sigma^2 x^{2 \rho -1} , x \in (0,+\infty)$ and $\Delta W_n := W_{ t_{n+1} } - W_{ t_{n} }$.
Here, the crucial term $\Phi_h $ is a kind of corrective mapping depending on the time step-size $h$ and satisfying Assumption \ref{assump. of Phi[1AS24]}, which is incorporated to tackle the tough issue caused by the polynomially growing coefficients.
A typical choice of such operator could be a projection operator $\P_h$ defined by \eqref{eq:Ph-defn}.
%
In addition, the introduction of the implicit term $Y_{n+1}^{-1}$ is used to preserve the positivity of the original model. 
Such partial implicitness is, however, explicitly solved by finding a positive root of a quadratic equation (see \eqref{eq:Y-n-explicit-solution} below).

It is worthwhile to mention that, identifying a convergence rate of the proposed scheme for the considered model is highly non-trivial, due to a highly nonlinear drift that blows up at the origin, superlinearly growing diffusion coefficients and a mixture of implicitness and explicitness
in the drift part of the scheme.
Based on the globally monotone condition of $f$ and $g$ in $(0, \infty)$ (Lemma \ref{lemma:f_g_monotonicity[1AS24]}):
\begin{equation}
       ( x - y )( f(x) - f(y) ) + \tfrac{\upsilon - 1}{2} | g(x) - g(y) |^2 \leq L |x - y|^2,
       \quad 
       \text{for some } \upsilon>2,
        \quad
       x, y >0,
    \end{equation}
we introduce a novel approach to analyze mean-square error bounds of the new schemes, which does not rely on a priori high-order moment bounds of the numerical approximations.
In both non-critical ($r+1 > 2 \rho$) and general critical ($r+1 = 2 \rho$) cases, the expected order-one mean-square convergence is successfully attained for the proposed scheme.
%
More accurately, for the non-critical case $r + 1 > 2 \rho$ or the general critical case $r + 1 = 2 \rho$ with $\tfrac{\alpha_2}{\sigma^2} \geq 4r + \tfrac12$,
the proposed scheme is proved to achieve first-order convergence in the following sense (see Theorem \ref{thm(main):Order_one[1AS24]}):
\begin{equation}
    \E \big[ | X_{ t_{n} } - Y_n |^2 \big] 
    \leq C 
    h^2,
    \quad
    \forall \ h = \tfrac{T}{N}>0, \, T > 0, \, N \in \N.
\end{equation}
As far as we know, this is the first article to propose and analyze an explicit, unconditionally positivity-preserving method of first-order convergence for the A\"{i}t-Sahalia type model \eqref{eq:ait-sahalia_general[1AS24]} in both the non-critical and the general critical cases.

{\color{black}Next we would like to give comments on the relation between the Euler type approach in \cite{Liu_Cao_Wang:AS_unconditionally_arkiv24} and the Milstein type approach in this paper.
Both works are based on implicit treatment of the term $\alpha_{-1} x^{-1}$, to ensure the positivity-preserving. In order to properly handle the polynomially growing coefficients, 
the previous paper \cite{Liu_Cao_Wang:AS_unconditionally_arkiv24}  relied on a taming strategy and a lot of efforts are spent to obtain a priori high-order moment bounds of the numerical approximations, which is
unavoidable in the error analysis of the tamed scheme.
Different from \cite{Liu_Cao_Wang:AS_unconditionally_arkiv24}, the present work introduces the mapping
$\Phi_h $ to treat the polynomially growing coefficients, which also simplifies the strong convergence analysis by not requiring a priori high-order moment bounds of the numerical approximations.}

The remainder of this article is organized as follows. The next section presents some preliminaries.
In Section \ref{section:scheme[1AS24]}, the numerical scheme and its properties are presented.
The mean-square convergence is analyzed in Section \ref{section:error analysis[1AS24]}, with the convergence rate obtained.
Numerical experiments
are provided to verify the previous theoretical findings in Section \ref{section:numerical experiments[1AS24]}.

\section{Preliminaries}\label{section:preliminaries[1AS24]}

Let $\N$ be the set of all positive integers and $C$ be a positive constant that is independent 
of the time step-size and may vary at different appearance. 
Denote the Euclidean norm in $\R$ by $| \cdot |$ and set $T \in (0, \infty)$.
Given a filtered probability space $(\Omega, \F, \{ \F_t \}_{t\in [0,T]}, \mathbb{P} )$, we use $\E$ to represent the expectation and $L^p(\Omega;\R), p > 0$ to represent the space of all $\R$-valued random variables $\eta$ satisfing $\E[| \eta |^p] < \infty $, equipped with the norm $\| \cdot \|_{ L^p(\Omega;\R) }$ defined by:
\begin{equation}
    \| \eta \|_{ L^p(\Omega;\R) } := ( \E[ | \eta |^p ] )^{ \frac{1}{p} }, \quad \forall \ \eta \in L^p(\Omega;\R), \ 
    p > 0.
\end{equation}
Let us consider the A\"{i}t-Sahalia type model of the following form:
\begin{equation}\label{eq:ait-sahalia[1AS24]}
      d X_t = ( \alpha_{-1} X_t^{-1} - \alpha_0 + \alpha_1 X_t + f( X_t ) ) dt + g( X_t ) d W_t, \ t > 0, \quad  X_0 = x_0 > 0,
\end{equation}
where for short we denote 
\begin{equation}\label{eq:f_and_g[1AS24]}
    f(x) := - \alpha_2 x^r , 
    \ \
    g(x) := \sigma x^{\rho}, 
    \quad
     x \in D := (0, \infty),
\end{equation}
with $\alpha_{-1}, \alpha_0, \alpha_1, \alpha_2, \sigma > 0$ and $r,\rho > 1$. 
The monotonicity condition for $f$ and $g$ is presented in the following lemma, whose proof can be found in \cite[Lemma 5.9, Lemma 5.12]{Wang:mean-square_ACM23}.
    \begin{lem}       
\label{lemma:f_g_monotonicity[1AS24]}
    Let $f$ and $g$ be defined by \eqref{eq:f_and_g[1AS24]}.
    If the parameters in the model \eqref{eq:ait-sahalia[1AS24]} satisfy one of the following conditions:
    \begin{enumerate}[(1)]
        \item $r+1 > 2\rho$,
        \item $r+1 = 2\rho$, $\tfrac{\alpha_2}{\sigma^2} 
         >
         \tfrac{1}{8} \big(r + 2 + \tfrac{1}{r} \big) $,
    \end{enumerate}
    then for all $x,y \in D$, there exist constants $\upsilon > 2$ and $L > 0$ such that
    \begin{equation} \label{eq:monotonicity[1AS24]}
       ( x - y )( f(x) - f(y) ) + \tfrac{\upsilon - 1}{2} | g(x) - g(y) |^2 \leq L |x - y|^2.
    \end{equation}
\end{lem}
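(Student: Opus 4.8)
The plan is to reduce the claimed monotonicity inequality to a one-variable comparison of power functions, for which the sharp constant can be tracked explicitly. By symmetry of every term in \eqref{eq:monotonicity[1AS24]} under interchanging $x$ and $y$, and since the inequality is trivial when $x=y$, I would assume without loss of generality that $x>y>0$. Substituting $f(x)=-\alpha_2 x^r$ and $g(x)=\sigma x^\rho$, the left-hand side becomes
\[
-\alpha_2 (x-y)(x^r-y^r) + \tfrac{\upsilon-1}{2}\,\sigma^2 (x^\rho - y^\rho)^2,
\]
where the drift contribution is negative (as $t\mapsto t^r$ is increasing) and must be shown to dominate the positive diffusion contribution up to a term of order $(x-y)^2$.

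The key step is to recast the diffusion term so that it becomes directly comparable with the drift term. Writing $x^\rho - y^\rho = \rho\int_y^x v^{\rho-1}\,\dd v$ and applying the Cauchy--Schwarz inequality to $\int_y^x 1\cdot v^{\rho-1}\,\dd v$ yields
\[
(x^\rho - y^\rho)^2 \le \frac{\rho^2}{2\rho-1}\,(x-y)\big(x^{2\rho-1}-y^{2\rho-1}\big).
\]
Inserting this bound and dividing through by $x-y>0$, the whole claim reduces to exhibiting $\upsilon>2$ and $L>0$ such that
\[
-\alpha_2 \big(x^r - y^r\big) + c\,\big(x^{2\rho-1}-y^{2\rho-1}\big) \le L\,(x-y),
\qquad c:=\frac{(\upsilon-1)\,\sigma^2\rho^2}{2(2\rho-1)}.
\]
Here the exponents satisfy $2\rho-1\le r$ precisely because $r+1\ge 2\rho$, which is what renders the drift power dominant.

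Finally I would split into the two cases. In the critical case $r+1=2\rho$ one has $2\rho-1=r$, so the left-hand side collapses to $(c-\alpha_2)(x^r-y^r)$, which is $\le 0\le L(x-y)$ for any $L>0$ as soon as $c\le\alpha_2$, i.e. $\upsilon-1\le \tfrac{2\alpha_2(2\rho-1)}{\sigma^2\rho^2}$. Such a $\upsilon>2$ exists iff $\tfrac{\alpha_2}{\sigma^2}>\tfrac{\rho^2}{2(2\rho-1)}$, and substituting $\rho=\tfrac{r+1}{2}$ shows this threshold equals exactly $\tfrac18\big(r+2+\tfrac1r\big)$, recovering hypothesis (2). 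In the non-critical case $r+1>2\rho$ one has $2\rho-1<r$, so I would fix any $\upsilon>2$ (hence $c$) and read the reduced inequality as $\psi(x)\le\psi(y)$ for $\psi(t):=c\,t^{2\rho-1}-\alpha_2 t^r - Lt$; since $\psi'(t)=c(2\rho-1)t^{2\rho-2}-\alpha_2 r t^{r-1}-L$ and the map $t\mapsto c(2\rho-1)t^{2\rho-2}-\alpha_2 r t^{r-1}$ is bounded above on $(0,\infty)$ (it tends to $0$ as $t\to0^+$ and to $-\infty$ as $t\to\infty$, because $2\rho-2<r-1$), choosing $L$ no smaller than this finite, positive supremum makes $\psi$ non-increasing and closes the estimate. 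The main obstacle is the critical case: the entire argument hinges on producing the sharp constant $\tfrac{\rho^2}{2(2\rho-1)}$ from the Cauchy--Schwarz step and verifying that it matches the threshold in (2) exactly, since any lossy bound on $(x^\rho-y^\rho)^2$ would fail to cover the borderline regime.
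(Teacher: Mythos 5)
Your proposal is correct, and every step checks out: the Cauchy--Schwarz step gives
\begin{equation*}
(x^\rho-y^\rho)^2=\rho^2\Big(\int_y^x v^{\rho-1}\,dv\Big)^2\le \rho^2 (x-y)\int_y^x v^{2\rho-2}\,dv=\tfrac{\rho^2}{2\rho-1}(x-y)\big(x^{2\rho-1}-y^{2\rho-1}\big),
\end{equation*}
the reduction to $-\alpha_2(x^r-y^r)+c\,(x^{2\rho-1}-y^{2\rho-1})\le L(x-y)$ is legitimate after dividing by $x-y>0$, the critical-case threshold $\tfrac{\rho^2}{2(2\rho-1)}\big|_{\rho=(r+1)/2}=\tfrac{(r+1)^2}{8r}=\tfrac18\big(r+2+\tfrac1r\big)$ matches hypothesis (2) exactly (and the strictness of (2) is precisely what lets you pick $\upsilon>2$ with $c\le\alpha_2$), and in the non-critical case the function $t\mapsto c(2\rho-1)t^{2\rho-2}-\alpha_2 r\,t^{r-1}$ is indeed bounded above on $(0,\infty)$ because $2\rho-2<r-1$, both exponents being positive, so taking $L$ at least this supremum (and positive) makes $\psi$ non-increasing.

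As for the comparison: the paper does not prove this lemma at all --- it quotes the result from Lemmas 5.9 and 5.12 of the cited reference \cite{Wang:mean-square_ACM23} --- so your argument is self-contained added value rather than a paraphrase. Moreover, the fact that your Cauchy--Schwarz constant $\tfrac{\rho^2}{2\rho-1}$ reproduces the stated threshold $\tfrac18\big(r+2+\tfrac1r\big)$ on the nose strongly suggests your route coincides in substance with the cited proof; your closing remark about sharpness is also justified, since letting $y\to x$ in \eqref{eq:monotonicity[1AS24]} forces $\tfrac{\upsilon-1}{2}\sigma^2\rho^2 x^{2\rho-2}-\alpha_2 r x^{r-1}\le L$ for all $x>0$, which in the critical case $2\rho-2=r-1$ yields the same necessary condition, so no lossy estimate of $(x^\rho-y^\rho)^2$ could cover the borderline regime. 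One cosmetic point: the supremum you invoke for $L$ is a priori only nonnegative, so to guarantee $L>0$ as the lemma demands you should take, say, $L\ge\max\{1,\sup_{t>0}\big(c(2\rho-1)t^{2\rho-2}-\alpha_2 r\,t^{r-1}\big)\}$; this changes nothing in the argument.
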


The well-posedness of the A\"{i}t-Sahalia type model \eqref{eq:ait-sahalia[1AS24]} has been established in \cite[Theorem 2.1]{szpruch2011numerical}, also quoted as follows.
\begin{lem}\label{wellposed of exact solution[1AS24]}     
    For any given initial data $X_0 = x_0 >0$, there exists a unique positive $\{ \F_t\}_{t\in [0,T]} $-adapted global solution with continuous sample paths $\{ X_t \}_{t \geq 0}$ to \eqref{eq:ait-sahalia[1AS24]}.
\end{lem}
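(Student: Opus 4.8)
The plan is to combine the classical local existence-and-uniqueness theory for SDEs with locally Lipschitz coefficients with a Khasminskii-type non-explosion (Lyapunov) argument that simultaneously rules out the solution reaching either boundary $0$ or $+\infty$ in finite time. First I would note that the drift $b(x) := \alpha_{-1} x^{-1} - \alpha_0 + \alpha_1 x - \alpha_2 x^r$ and the diffusion $g(x) = \sigma x^{\rho}$ in \eqref{eq:ait-sahalia[1AS24]}--\eqref{eq:f_and_g[1AS24]} are smooth, hence locally Lipschitz continuous, on the open interval $D = (0,\infty)$. Consequently, for the initial datum $x_0 \in D$ the standard theory yields a unique maximal $\{\F_t\}_{t\in[0,T]}$-adapted strong solution $\{X_t\}_{t\in[0,\tau_e)}$ with continuous sample paths, where $\tau_e$ is the explosion time at which $X$ exits every compact subinterval of $D$. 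It then remains only to prove that $\tau_e = \infty$ almost surely, which also guarantees positivity.

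To this end I would fix $p \in (0,1)$ and work with the Lyapunov function
\[
    V(x) := x^p + x^{-p}, \qquad x \in D,
\]
which is nonnegative, $C^2$ on $D$, and blows up as $x \to 0^+$ and as $x \to \infty$. For $m \in \N$ with $1/m < x_0 < m$, set $\tau_m := \inf\{ t \in [0,\tau_e) : X_t \notin (1/m, m)\}$, so that $\tau_m \uparrow \tau_\infty \le \tau_e$; the goal reduces to $\tau_\infty = \infty$ a.s. The crux is the generator estimate: writing
\[
    (\mathcal{L}V)(x) = b(x) V'(x) + \tfrac12 \sigma^2 x^{2\rho} V''(x),
\]
I would show there is a constant $C>0$, independent of $m$, such that $(\mathcal{L}V)(x) \le C\bigl(1 + V(x)\bigr)$ for all $x \in D$.

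Verifying this bound is the main obstacle, since it requires balancing singular and superlinear contributions from both parts of $V$ at both endpoints. Near $x=0$ the most singular term of $(\mathcal{L}V)(x)$ is $-p\alpha_{-1} x^{-p-2}$, produced by the repulsive drift $\alpha_{-1}x^{-1}$ acting on $x^{-p}$; because $\rho > 1$ and $r>1$, every other term (including the diffusion term $\tfrac12 \sigma^2 p(p+1) x^{2\rho - p - 2}$ and the cross-term $p\alpha_{-1}x^{p-2}$ from $x^p$) is of strictly higher order in $x$, so $(\mathcal{L}V)(x) \to -\infty$ as $x\to 0^+$: this is exactly the mechanism that keeps the solution positive. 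Near $x=\infty$ the restoring drift $-\alpha_2 x^r$ acting on $x^p$ yields the leading term $-p\alpha_2 x^{r+p-1} < 0$; the choice $p\in(0,1)$ forces $V''<0$ on the $x^p$-part so the superlinear diffusion term $\tfrac12\sigma^2 p(p-1)x^{2\rho+p-2}$ is negative as well, while the remaining positive contributions, namely $p\alpha_1 x^p$ and $p\alpha_2 x^{r-p-1}$ (from $x^{-p}$) together with the positive diffusion term $\tfrac12\sigma^2 p(p+1)x^{2\rho-p-2}$, are of order at most $\max\{p,\,r-p-1,\,2\rho-p-2\} < r+p-1$, using $r+1 \ge 2\rho$. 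Hence all dangerous high-order terms are dominated by the negative restoring term, so $(\mathcal{L}V) \to -\infty$ at infinity; being continuous on compact subsets of $D$, it is therefore bounded above by $C(1+V)$ throughout $D$.

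Finally I would apply It\^o's formula to $V(X_{t\wedge\tau_m})$ and take expectations, the stochastic integral having zero mean since its integrand is bounded on the compact range $[1/m,m]$. The generator bound and Gronwall's inequality give
\[
    \E\bigl[ V(X_{t\wedge\tau_m}) \bigr] \le \bigl( V(x_0) + Ct \bigr)\,e^{Ct}, \qquad t \in [0,T],
\]
uniformly in $m$. Since, on $\{\tau_m \le t\}$, path-continuity forces $X_{\tau_m}\in\{1/m,m\}$ and hence $V(X_{\tau_m}) \ge m^p$, Markov's inequality yields $\mathbb{P}(\tau_m \le t) \le (V(x_0)+Ct)e^{Ct}/m^p \to 0$ as $m\to\infty$ for each fixed $t\le T$. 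Therefore $\mathbb{P}(\tau_\infty \le t)=0$ for all $t$, so $\tau_\infty = \tau_e = \infty$ a.s.; the maximal local solution is thus a global, positive, $\{\F_t\}_{t\in[0,T]}$-adapted solution with continuous paths, and uniqueness is inherited from the local statement.
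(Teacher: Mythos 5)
Your proof is correct: the localization via $\tau_m$, the Lyapunov function $V(x)=x^p+x^{-p}$ with $p\in(0,1)$, the verification that $\mathcal{L}V\to-\infty$ at both boundaries (the singular term $-p\alpha_{-1}x^{-p-2}$ dominating near $0$, and $-p\alpha_2 x^{r+p-1}$ dominating near $\infty$, with $r+1\ge 2\rho$ used exactly where needed to control the positive diffusion term $\tfrac12\sigma^2 p(p+1)x^{2\rho-p-2}$), and the Gronwall--Markov conclusion $\mathbb{P}(\tau_m\le t)\le (V(x_0)+Ct)e^{Ct}m^{-p}\to 0$ are all sound. The paper itself gives no proof of this lemma, quoting it from \cite{szpruch2011numerical}; your Khasminskii-type non-explosion argument is essentially the standard proof given in that reference, so this is the same approach.
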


Next we revisit some lemmas that give moment bounds for the exact solutions of the A\"{i}t-Sahalia type model \eqref{eq:ait-sahalia[1AS24]}.
For the non-critical case, the next lemma is quoted from \cite[Lemma 2.1]{szpruch2011numerical}.
\begin{lem}\label{lem:moment_case1[1AS24]}
     Let $r+1 > 2\rho$. Let $\{ X_t \}_{ t\in [0,T] }$ be the solution of (\ref{eq:ait-sahalia[1AS24]}). For any $p_0 \geq 2$ it holds that 
 \begin{equation}
     \sup_{t\in [0, \infty)} \E [ |X_t|^{p_0} ] < \infty, \quad
     \sup_{t\in [0, \infty)} \E [ |X_t|^{-p_0} ] < \infty.
 \end{equation}
\end{lem}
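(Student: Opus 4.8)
The plan is to derive uniform-in-time Lyapunov estimates for the test functions $x \mapsto x^{p_0}$ and $x \mapsto x^{-p_0}$ by means of It\^o's formula, exploiting the non-critical condition $r+1>2\rho$ to ensure that the dissipativity furnished by the drift term $-\alpha_2 x^r$ dominates the superlinear diffusion. Throughout, since the positivity and global existence of $\{X_t\}$ are already guaranteed by Lemma \ref{wellposed of exact solution[1AS24]} while no moment is yet available, I would first introduce the localizing stopping times $\tau_R := \inf\{ t\ge 0 : X_t \notin (R^{-1}, R) \}$, apply all subsequent identities on $[0, t\wedge\tau_R]$, and only pass to the limit $R \to \infty$ at the very end via Fatou's lemma; the stopped local martingale coming from the $g(X_s)\,\mathrm{d}W_s$ contribution then has zero expectation and disappears.

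For the positive moments, applying It\^o's formula to $X_t^{p_0}$ produces the generator
\begin{equation*}
\mathcal{L}(x^{p_0}) = p_0\alpha_{-1} x^{p_0-2} - p_0\alpha_0 x^{p_0-1} + p_0\alpha_1 x^{p_0} - p_0\alpha_2 x^{p_0+r-1} + \tfrac12 p_0(p_0-1)\sigma^2 x^{p_0+2\rho-2}.
\end{equation*}
Because $p_0 \ge 2$, the term $x^{p_0-2}$ stays bounded near the origin, while for large $x$ the decisive comparison is between the two highest powers $p_0+r-1$ and $p_0+2\rho-2$: the hypothesis $r+1 > 2\rho$ gives $p_0+r-1 > p_0+2\rho-2$, so the negative term $-p_0\alpha_2 x^{p_0+r-1}$ ultimately overwhelms both the diffusion term and the positive linear term $p_0\alpha_1 x^{p_0}$. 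Hence there exist constants $C_1, C_2 > 0$ with $\mathcal{L}(x^{p_0}) \le C_1 - C_2 x^{p_0}$ for all $x > 0$. Taking expectations yields $\tfrac{d}{dt}\E[X_{t\wedge\tau_R}^{p_0}] \le C_1 - C_2 \E[X_{t\wedge\tau_R}^{p_0}]$, and Gronwall's inequality gives $\E[X_{t\wedge\tau_R}^{p_0}] \le x_0^{p_0} + C_1/C_2$ uniformly in $t$ and $R$; letting $R\to\infty$ and taking the supremum over $t$ finishes the first bound.

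For the inverse moments I would repeat the computation with $X_t^{-p_0}$, whose generator reads
\begin{equation*}
\mathcal{L}(x^{-p_0}) = -p_0\alpha_{-1} x^{-p_0-2} + p_0\alpha_0 x^{-p_0-1} - p_0\alpha_1 x^{-p_0} + p_0\alpha_2 x^{r-1-p_0} + \tfrac12 p_0(p_0+1)\sigma^2 x^{2\rho-2-p_0}.
\end{equation*}
Near the origin the most singular term is $-p_0\alpha_{-1}x^{-p_0-2}$, which dominates every other contribution and, in particular, supplies a restoring term $\le -C_2 x^{-p_0}$; this is precisely where the blow-up of the drift at $0$ is turned to our advantage. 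The genuine difficulty, and the \emph{main obstacle}, lies at infinity: the two terms $p_0\alpha_2 x^{r-1-p_0}$ and $\tfrac12 p_0(p_0+1)\sigma^2 x^{2\rho-2-p_0}$ may grow when $p_0 < r-1$ or $p_0 < 2\rho-2$, so a self-contained Lyapunov inequality of the form $\mathcal{L}(x^{-p_0}) \le C_1 - C_2 x^{-p_0}$ cannot hold. I would circumvent this by bounding these far-field terms by $C(1 + x^q)$ with $q := \max\{r-1,\,2\rho-2\} \ge 0$, so that after taking expectations
\begin{equation*}
\tfrac{d}{dt}\E[X_{t\wedge\tau_R}^{-p_0}] \le C_1 - C_2 \E[X_{t\wedge\tau_R}^{-p_0}] + C_3\big(1 + \E[X_{t\wedge\tau_R}^{q}]\big),
\end{equation*}
and then invoke the positive-moment bound just established (applied with exponent $\max\{2,q\}$, followed by Jensen's inequality if $q<2$) to control the last term uniformly in $t$ and $R$. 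A final application of Gronwall's inequality and the passage $R\to\infty$ deliver $\sup_{t}\E[X_t^{-p_0}] < \infty$. The coupling of the two estimates, namely the fact that the inverse moments rely on the already-proven positive moments to tame the coefficients at infinity, is the point that requires the most care.
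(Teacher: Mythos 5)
First, a point of reference: the paper itself contains no proof of this lemma --- it is quoted verbatim from \cite[Lemma 2.1]{szpruch2011numerical} --- so your proposal must be judged against the standard Lyapunov argument used there. In spirit you reproduce it faithfully: the generator computations for $x^{p_0}$ and $x^{-p_0}$ are correct, the exponent comparison $p_0+r-1>p_0+2\rho-2$ (from $r+1>2\rho$) correctly identifies $-p_0\alpha_2 x^{p_0+r-1}$ as the dominant term at infinity, the dominance of $-p_0\alpha_{-1}x^{-p_0-2}$ at the origin is exactly the mechanism that makes all negative moments finite, and the localization-plus-Fatou scaffolding is the right way to proceed in the absence of a priori moment bounds.

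There is, however, one step that fails as written: the differential inequality $\tfrac{d}{dt}\E[X_{t\wedge\tau_R}^{p_0}] \le C_1 - C_2\,\E[X_{t\wedge\tau_R}^{p_0}]$. Differentiating the stopped expectation gives $\E\big[\1_{\{t<\tau_R\}}\mathcal{L}(X_t^{p_0})\big] \le C_1 - C_2\,\E\big[\1_{\{t<\tau_R\}}X_t^{p_0}\big]$, and $\E\big[\1_{\{t<\tau_R\}}X_t^{p_0}\big]$ differs from $\E[X_{t\wedge\tau_R}^{p_0}]$ by the frozen contribution $\E\big[\1_{\{\tau_R\le t\}}X_{\tau_R}^{p_0}\big]$, which can be as large as $R^{p_0}\,\mathbb{P}(\tau_R\le t)$ and has the wrong sign; so Gronwall cannot be invoked at fixed $R$. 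The repair is standard and preserves your conclusion: apply It\^o's formula to $e^{C_2 t}x^{p_0}$, use $C_2 x^{p_0}+\mathcal{L}(x^{p_0})\le C_1$ to get $\E\big[e^{C_2(t\wedge\tau_R)}X_{t\wedge\tau_R}^{p_0}\big]\le x_0^{p_0}+\tfrac{C_1}{C_2}e^{C_2 t}$, restrict the left side to $\{\tau_R>t\}$, let $R\to\infty$ by Fatou, and divide by $e^{C_2 t}$; this yields $\E[X_t^{p_0}]\le x_0^{p_0}e^{-C_2 t}+C_1/C_2$, uniform in $t$. The same remark applies to your inverse-moment Gronwall step. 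Separately, your two-stage coupling (positive moments feeding the far-field terms of the inverse-moment estimate) is correct but avoidable: with the single Lyapunov function $V(x)=x^{p_0}+x^{-p_0}$, the troublesome terms $p_0\alpha_2 x^{r-1-p_0}$ and $\tfrac12 p_0(p_0+1)\sigma^2 x^{2\rho-2-p_0}$ are dominated at infinity by the dissipative term $-p_0\alpha_2 x^{p_0+r-1}$ contributed by the positive-power part (since $r-1-p_0<p_0+r-1$ and, by $r+1>2\rho$, also $2\rho-2-p_0<p_0+r-1$), so $\mathcal{L}V\le C_1-C_2V$ holds globally and both bounds follow in a single pass --- which is essentially how the cited source argues, and spares you the dependence of the second estimate on the first.
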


For the general critical case, we quote from \cite[Lemma 4.6]{Wang:theta_mean-square_BIT20} the following lemma.
\begin{lem}\label{lem.bound.case2[1AS24]}
        Let $r+1 = 2\rho$. Let $\{ X_t \}_{ t\in [0,T] }$ be the solution of \eqref{eq:ait-sahalia[1AS24]}. For any $2 \leq p_1 \leq \frac{\sigma^2 + 2 \alpha_2 }{ \sigma^2 }$ and for any $p_2 \geq 2$, it holds that 
 \begin{equation}
     \sup_{t\in [0, \infty)} \E [ |X_t|^{p_1} ] < \infty, \quad
     \sup_{t\in [0, \infty)} \E [ |X_t|^{-p_2} ] < \infty.
 \end{equation}
\end{lem}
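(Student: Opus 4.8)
The plan is to derive both bounds from It\^o's formula applied to power-type Lyapunov functions and to close the resulting differential inequalities by a Gronwall/comparison argument. Working with the positive solution provided by Lemma~\ref{wellposed of exact solution[1AS24]}, I would first introduce the stopping times $\tau_m := \inf\{ t\ge 0 : X_t \notin (1/m, m) \}$ to localize, so that the It\^o integral generated by the diffusion coefficient $g(X_t)=\sigma X_t^{\rho}$ is a genuine martingale and drops out after taking expectations; all estimates below are uniform in $m$ and survive the limit $m\to\infty$ by Fatou's lemma, since $X_{t\wedge\tau_m}\to X_t$ almost surely.

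For the positive moments, set $p:=p_1\ge 2$ and compute the generator of \eqref{eq:ait-sahalia[1AS24]} acting on $x^{p}$,
\begin{equation}
\mathcal{L}(x^{p}) = p\alpha_{-1}x^{p-2} - p\alpha_0 x^{p-1} + p\alpha_1 x^{p} - p\alpha_2 x^{p+r-1} + \tfrac12 p(p-1)\sigma^2 x^{p-2+2\rho}.
\end{equation}
The decisive feature of the critical regime is the exponent identity $p-2+2\rho = p+r-1$, forced by $r+1=2\rho$: the superlinear mean-reverting drift and the superlinear diffusion act at exactly the same order $x^{p+r-1}$, with net coefficient $\tfrac{p}{2}\big[(p-1)\sigma^2-2\alpha_2\big]$. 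This coefficient is nonpositive precisely when $p\le 1+\tfrac{2\alpha_2}{\sigma^2}=\tfrac{\sigma^2+2\alpha_2}{\sigma^2}$, which is exactly the admissible range of $p_1$. In the strict-interior subcase $p_1<\tfrac{\sigma^2+2\alpha_2}{\sigma^2}$ this coefficient is strictly negative; since $p+r-1>p$, the resulting $x^{p+r-1}$ term dominates the anti-dissipative linear contribution $p\alpha_1 x^{p}$ for large $x$, while for $p\ge 2$ the singular term $x^{p-2}$ stays bounded near the origin. One thus obtains a global dissipative estimate $\mathcal{L}(x^{p})\le C_1 - C_2 x^{p}$, whence $\tfrac{\dd}{\dd t}\E[X_{t\wedge\tau_m}^{p}]\le C_1 - C_2\,\E[X_{t\wedge\tau_m}^{p}]$ and the time-uniform bound follows from Gronwall's inequality.

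For the negative moments I would avoid the behaviour at infinity by using the combined Lyapunov function $W(x):=x^{-q}+x^{p}$ with $q:=p_2\ge 2$ and any fixed $p\in\big(0,\tfrac{\sigma^2+2\alpha_2}{\sigma^2}\big)$, so that the coefficient of $x^{p+r-1}$ in $\mathcal{L}(x^{p})$ is strictly negative. Near the origin the generator of $x^{-q}$ is dominated by $-q\alpha_{-1}x^{-q-2}$, reflecting the strong repulsion produced by the singular drift $\alpha_{-1}x^{-1}$, so $\mathcal{L}W\to-\infty$ as $x\to 0$; at infinity the only growing contribution of $\mathcal{L}(x^{-q})$ is a positive multiple of $x^{r-1-q}$, which is dominated by the strictly negative $x^{p+r-1}$ term coming from $\mathcal{L}(x^{p})$ because $p+r-1>r-1-q$ (as $p>0$). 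Hence $\mathcal{L}W\le C_1 - C_2 W$ holds globally, giving $\sup_{t}\E[X_t^{-q}]<\infty$ for every $q\ge 2$.

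The main obstacle is the critical endpoint $p_1=\tfrac{\sigma^2+2\alpha_2}{\sigma^2}$, where the net coefficient $\tfrac{p}{2}\big[(p-1)\sigma^2-2\alpha_2\big]$ vanishes identically: the leading superlinear dissipation cancels exactly, the surviving top-order term is the anti-dissipative $p\alpha_1 x^{p}$, and the naive Gronwall estimate then yields only an exponentially growing, non-uniform bound. I expect this borderline case, rather than the routine generator computations, to be the crux. Resolving it should require either a refined Lyapunov function that recovers the marginal dissipation lost at the threshold (for instance a logarithmically corrected weight of $x^{p}\,(1+\log x)^{-1}$ type), or first proving the uniform bound for all sub-critical powers $p<\tfrac{\sigma^2+2\alpha_2}{\sigma^2}$ and then upgrading to the endpoint through a uniform-integrability/limiting argument.
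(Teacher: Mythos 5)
Your reconstruction is essentially the same argument as the actual source of this lemma: the paper itself supplies no proof, quoting the result from \cite[Lemma 4.6]{Wang:theta_mean-square_BIT20}, and the proof there is precisely the localized It\^o/Lyapunov computation you carry out. Your generator calculations are correct: under $r+1=2\rho$ the superlinear drift and diffusion contributions merge at the common order $x^{p+r-1}$ with net coefficient $\tfrac{p}{2}\big[(p-1)\sigma^2-2\alpha_2\big]$, which is nonpositive exactly for $p\le\tfrac{\sigma^2+2\alpha_2}{\sigma^2}$, and your combined function $W(x)=x^{-q}+x^{p}$ for the negative moments is sound (dissipation near the origin from $-q\alpha_{-1}x^{-q-2}$, the growing piece $x^{r-1-q}$ absorbed by $-cx^{p+r-1}$ at infinity since $p>0$, and the $-q\alpha_1x^{-q}$ term supplying the $-C_2 W$ structure).

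The one point to sharpen is the endpoint $p_1=\tfrac{\sigma^2+2\alpha_2}{\sigma^2}$, which you correctly identify as the place where the dissipation cancels, but whose difficulty you overestimate for the purposes at hand. Everything this paper extracts from the lemma — the factors $\sup_{t\in[0,T]}\Vert X_t\Vert_{L^{8r+2}(\Omega;\R)}$ in Lemma \ref{lem. R_n estimates [1AS24]} — only requires a bound uniform on the finite horizon $[0,T]$, and at the endpoint the vanishing of the top-order coefficient still leaves $\mathcal{L}(x^{p})\le p\alpha_{-1}x^{p-2}-p\alpha_0x^{p-1}+p\alpha_1x^{p}\le C(1+x^{p})$, so plain Gronwall gives $\sup_{t\in[0,T]}\E[X_t^{p}]<\infty$ with a $T$-dependent (exponential) constant; no logarithmically corrected weight is needed. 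Conversely, note that neither of your proposed endpoint repairs would obviously deliver the stated time-uniform bound on $[0,\infty)$: the log-corrected Lyapunov function controls quantities of the form $\E\big[X_t^{p^*}(1+\log X_t)^{-1}\big]$ rather than the endpoint moment itself, and upgrading from the subcritical range fails because the dissipation constant $C_2=C_2(p)$ in $\mathcal{L}(x^{p})\le C_1-C_2x^{p}$ degenerates as $p$ approaches the threshold, so the subcritical bounds are not uniform in $p$. Since at the endpoint the surviving drift of $x^{p^*}$ is genuinely $+p^*\alpha_1x^{p^*}$ at infinity, the pure power-Lyapunov method caps out exactly there — which is why the admissible range of $p_1$ terminates at $\tfrac{\sigma^2+2\alpha_2}{\sigma^2}$; your argument, supplemented by the finite-horizon Gronwall step at the endpoint, recovers everything the present paper actually uses.
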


Equipped with these moment bounds, the H\"{o}lder continuity of the solutions can also be derived, as stated in the following lemmas, 
quoted directly from \cite{Wang:theta_mean-square_BIT20}.
\begin{lem}
\cite[Lemma 4.4]{Wang:theta_mean-square_BIT20}
\label{lem:Holder_continuous_k+1>2rho[1AS24]}
    Let $r + 1 > 2 \rho$.
    Then it holds that, for any $p \geq 1$ and $t,s \in[0,T]$,
    \begin{align}
        \Vert X_t - X_s \Vert_{L^{p}(\Omega; \R)}
        & \leq 
        C \vert t-s \vert^{\frac12},\\
        \Vert X_t^{-1} - X_s^{-1} \Vert_{L^{p}(\Omega; \R)}
        & \leq 
        C \vert t-s \vert^{\frac12}.
    \end{align}
\end{lem}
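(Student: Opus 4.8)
The plan is to treat both estimates through the same route: express the increment over $[s,t]$ (with $s \le t$ without loss of generality) as a drift integral plus a stochastic integral, bound the drift contribution by Minkowski's integral inequality, bound the diffusion contribution by the Burkholder–Davis–Gundy (BDG) inequality, and control every resulting power of $X_u$ uniformly in $u$ by invoking the moment bounds of Lemma \ref{lem:moment_case1[1AS24]}. I would first reduce to the range $p \ge 2$, since for $1 \le p < 2$ the asserted bounds follow from the $p = 2$ case via $\Vert \cdot \Vert_{L^p(\Omega;\R)} \le \Vert \cdot \Vert_{L^2(\Omega;\R)}$ on the finite probability space.

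For the first estimate, from \eqref{eq:ait-sahalia[1AS24]} I would write
\[
X_t - X_s = \int_s^t \big( \alpha_{-1} X_u^{-1} - \alpha_0 + \alpha_1 X_u - \alpha_2 X_u^r \big)\, \dd u + \int_s^t \sigma X_u^\rho \, \dd W_u .
\]
Minkowski's integral inequality bounds the $L^p(\Omega;\R)$ norm of the drift integral by $\int_s^t \Vert \alpha_{-1} X_u^{-1} - \alpha_0 + \alpha_1 X_u - \alpha_2 X_u^r \Vert_{L^p(\Omega;\R)}\, \dd u$; by Lemma \ref{lem:moment_case1[1AS24]} each of $\Vert X_u^{-1}\Vert_{L^p(\Omega;\R)}$, $\Vert X_u\Vert_{L^p(\Omega;\R)}$, $\Vert X_u^r\Vert_{L^p(\Omega;\R)}$ is finite and uniform in $u$, so this integral is at most $C(t-s) \le C\, T^{1/2} |t-s|^{1/2}$. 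For the stochastic integral, BDG followed by Jensen's inequality in the time variable gives
\[
\Big\Vert \int_s^t \sigma X_u^\rho\, \dd W_u \Big\Vert_{L^p(\Omega;\R)}^p \le C_p\, \E\Big[\Big(\int_s^t \sigma^2 X_u^{2\rho}\, \dd u\Big)^{p/2}\Big] \le C_p (t-s)^{p/2-1}\!\int_s^t \sigma^p\, \E[X_u^{\rho p}]\, \dd u \le C (t-s)^{p/2},
\]
again using the uniform positive-moment bound. Combining the two contributions yields the first inequality.

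For the second estimate, I would apply It\^o's formula to $Z_t := X_t^{-1}$, obtaining
\[
\dd Z_t = \big( -\alpha_{-1} X_t^{-3} + \alpha_0 X_t^{-2} - \alpha_1 X_t^{-1} + \alpha_2 X_t^{r-2} + \sigma^2 X_t^{2\rho-3}\big)\, \dd t - \sigma X_t^{\rho-2}\, \dd W_t ,
\]
and then repeat verbatim the Minkowski/BDG argument above. The only new feature is that the exponents now include the fixed negative power $X^{-3}$ together with the signed powers $X^{r-2}$, $X^{2\rho-3}$ and $X^{\rho-2}$. The main (and essentially only) obstacle is to ensure that each of these has an $L^p(\Omega;\R)$ moment that is finite and uniform in $u$; this is precisely where the non-critical hypothesis $r+1 > 2\rho$ enters, since Lemma \ref{lem:moment_case1[1AS24]} supplies \emph{all} positive and \emph{all} negative moments of $X_u$ uniformly in time. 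Consequently every power arising in the $Z$-dynamics is controlled regardless of whether $X^{r-2}$ or $X^{2\rho-3}$ represents a positive or a negative power, the drift integral contributes $O(|t-s|)$ and the stochastic integral contributes $O(|t-s|^{1/2})$, and the bound for $\Vert X_t^{-1}-X_s^{-1}\Vert_{L^p(\Omega;\R)}$ follows exactly as before.
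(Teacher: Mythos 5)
Your proposal is correct, and it is essentially the same approach as the source: the paper itself states this lemma without proof, quoting it from \cite[Lemma 4.4]{Wang:theta_mean-square_BIT20}, and your argument---decomposing the increment into drift and stochastic integrals, applying Minkowski and BDG (after the reduction to $p \geq 2$ via the Lyapunov inequality), applying It\^o's formula to $Z_t = X_t^{-1}$, and absorbing every resulting power of $X_u$ by the uniform positive and negative moment bounds of Lemma \ref{lem:moment_case1[1AS24]}, which are available for all orders precisely because $r+1 > 2\rho$---is the standard proof carried out in that reference. Your It\^o computation for $X_t^{-1}$ and the exponent bookkeeping are accurate, so there is no gap.
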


\begin{lem}\cite[Lemma 4.7]{Wang:theta_mean-square_BIT20}
\label{lem:Holder_continuous_k+1=2rho[1AS24]}
    Let $r + 1 = 2 \rho$.
    Then for any $t,s \in[0,T]$ it holds that
    \begin{align}
        \Vert X_t - X_s \Vert_{L^{q_1}(\Omega; \R)}
        & \leq 
        C \vert t-s \vert^{\frac12},
        \quad
        2 
        \leq 
        q_1 
        \leq 
        \tfrac{1}{r} 
        \big( \tfrac{2 \alpha_2}{\sigma^2} + 1 \big), \\
        \Vert X_t^{-1} - X_s^{-1} \Vert_{L^{q_2}(\Omega; \R)}
        & \leq 
        C \vert t-s \vert^{\frac12},
        \quad
        2 
        \leq 
        q_2
        <
        \tfrac{1}{r} 
        \big( \tfrac{2 \alpha_2}{\sigma^2} + 1 \big).
    \end{align}
\end{lem}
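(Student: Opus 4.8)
The plan is to treat the two estimates separately and to derive the continuity of $\{X_t^{-1}\}$ from that of $\{X_t\}$ via an elementary algebraic identity, so that the essential work is concentrated in the first inequality. Throughout I would assume without loss of generality that $s \le t$ and $|t-s| \le T$, and I would rely exclusively on the uniform moment bounds of Lemma \ref{lem.bound.case2[1AS24]}, which supply $\sup_{u}\E[|X_u|^{p}] < \infty$ for every $2 \le p \le 1 + \tfrac{2\alpha_2}{\sigma^2}$ and $\sup_u \E[|X_u|^{-p}]<\infty$ for every $p \ge 2$.

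For the first estimate I would start from the integral form of \eqref{eq:ait-sahalia[1AS24]},
\[ X_t - X_s = \int_s^t \big( \alpha_{-1} X_u^{-1} - \alpha_0 + \alpha_1 X_u - \alpha_2 X_u^r \big)\,\dd u + \int_s^t \sigma X_u^{\rho}\,\dd W_u , \]
and split the $L^{q_1}(\Omega;\R)$ norm into drift and diffusion contributions by Minkowski's inequality. The drift part I would bound using Jensen's (equivalently Minkowski's integral) inequality in time, producing a factor $|t-s|$ times the supremum over $u$ of the $L^{q_1}$ norms of the four drift terms. The only delicate term is $\alpha_2 X_u^r$, which demands $\E[|X_u|^{r q_1}]<\infty$; this is exactly where the restriction $q_1 \le \tfrac1r\big(\tfrac{2\alpha_2}{\sigma^2}+1\big)$ enters, since it is equivalent to $r q_1 \le 1 + \tfrac{2\alpha_2}{\sigma^2}$, the largest admissible positive moment. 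The negative-power term $\alpha_{-1}X_u^{-1}$ is harmless because negative moments of every order are uniformly bounded. For the diffusion part I would apply the Burkholder--Davis--Gundy inequality (valid since $q_1 \ge 2$) followed by Minkowski's integral inequality, yielding a factor $|t-s|^{1/2}$ times $\sup_u \|\sigma X_u^{\rho}\|_{L^{q_1}}$; here $\E[|X_u|^{\rho q_1}]<\infty$ is automatic because $\rho = \tfrac{r+1}{2} < r$ for $r>1$, so the diffusion moment requirement is dominated by the drift one. Combining the two contributions and absorbing the factor $|t-s| \le T^{1/2}|t-s|^{1/2}$ into the constant delivers the claimed bound.

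For the second estimate I would avoid an It\^{o} expansion of $X^{-1}$ and instead use the identity $X_t^{-1} - X_s^{-1} = -(X_t - X_s)\,X_t^{-1}X_s^{-1}$ together with H\"{o}lder's inequality. Choosing exponents $a,b$ with $\tfrac1{q_2} = \tfrac1a + \tfrac1b$, I would take $a = q_2 + \varepsilon$ for a sufficiently small $\varepsilon>0$ and $b$ correspondingly large; the strict inequality $q_2 < \tfrac1r\big(\tfrac{2\alpha_2}{\sigma^2}+1\big)$ is precisely what guarantees that $a$ can still be kept within the admissible range $\big[2, \tfrac1r(\tfrac{2\alpha_2}{\sigma^2}+1)\big]$ of the first estimate. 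Then $\|X_t - X_s\|_{L^a} \le C|t-s|^{1/2}$ by the first part, while $\|X_t^{-1}X_s^{-1}\|_{L^b} \le \|X_t^{-1}\|_{L^{2b}}\|X_s^{-1}\|_{L^{2b}}$ is uniformly finite by the negative-moment bounds, and the product yields the result.

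The main obstacle is not any single computation but the careful bookkeeping of moment exponents: one must verify that every power of $X_u$ and $X_u^{-1}$ appearing in the drift and diffusion lies within the range for which Lemma \ref{lem.bound.case2[1AS24]} provides uniform-in-time moment bounds, and---crucially for the second estimate---that the strict inequality on $q_2$ leaves just enough room to apply H\"{o}lder while still invoking the first estimate at an exponent $a>q_2$. Once these exponent constraints are tracked, the remainder reduces to the standard Burkholder--Davis--Gundy and Minkowski machinery.
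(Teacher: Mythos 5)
Your proposal is correct, but note that the paper itself offers no proof of this statement: it is quoted verbatim from \cite[Lemma 4.7]{Wang:theta_mean-square_BIT20}, so the only comparison available is with the cited reference's argument, which your proof essentially reproduces. The exponent bookkeeping is the heart of the matter and you have it right: for the first estimate, the drift term $\alpha_2 X_u^r$ forces $r q_1 \leq 1 + \tfrac{2\alpha_2}{\sigma^2}$, which is exactly the stated ceiling on $q_1$, while the diffusion requirement $\rho q_1 \leq 1 + \tfrac{2\alpha_2}{\sigma^2}$ is strictly weaker since $\rho = \tfrac{r+1}{2} < r$ for $r>1$, and the negative moment $\|X_u^{-1}\|_{L^{q_1}}$ is covered by Lemma \ref{lem.bound.case2[1AS24]} at every order; the Minkowski--BDG machinery then yields $|t-s|$ and $|t-s|^{1/2}$ factors as you describe. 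For the inverse process, your identity $X_t^{-1} - X_s^{-1} = -(X_t - X_s)X_t^{-1}X_s^{-1}$ combined with H\"older at exponents $a = q_2 + \varepsilon$, $b = \tfrac{a q_2}{a - q_2}$ is a clean and elementary route (an alternative, equally standard, is to apply It\^o's formula to $x \mapsto x^{-1}$ and estimate the resulting drift and diffusion directly with negative-moment bounds), and you correctly identify that the strict inequality $q_2 < \tfrac1r\bigl(\tfrac{2\alpha_2}{\sigma^2}+1\bigr)$ is consumed precisely in keeping $a$ inside the admissible range of the first estimate while $\|X_t^{-1}X_s^{-1}\|_{L^{b}}$ stays uniformly bounded for arbitrarily large $b$. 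No gaps.
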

The aforementioned lemmas help deduce the following lemma (cf. \cite[(4.21), (4.24)]{Wang:theta_mean-square_BIT20}).
\begin{lem}\label{lem:f_L^2_bounds[1AS24]}
Let $r + 1 \geq 2 \rho$. If one of the following conditions holds:
\begin{enumerate}[(1)]
    \item $r + 1 > 2 \rho$,
    \item $r + 1 = 2 \rho$, $\tfrac{\alpha_2}{\sigma^2} > 2r - \tfrac32$,
\end{enumerate}
then for any $t,s \in[0,T]$ we have
\begin{equation}
\begin{aligned}
\| f (X_t)- f (X_s) \|_{L^{2}(\Omega; \R)}
\leq
C |t - s|^{\frac12},
\\
\| g (X_t)- g (X_s) \|_{L^{2}(\Omega; \R)}
\leq
C |t - s|^{\frac12},\\
\| \hat g (X_t)- \hat g (X_s) \|_{L^{2}(\Omega; \R)}
\leq
C |t - s|^{\frac12},
\end{aligned}
\end{equation}
where $f,g$ are defined as \eqref{eq:f_and_g[1AS24]} and
\begin{equation}\label{eq:hat(g)[1AS24]}
    \hat{g}(x) := g'(x)g(x) = \rho \sigma^2 x^{2 \rho -1} ,
    \quad
    x \in D.
\end{equation}
\end{lem}


\section{The proposed explicit Milstein-type scheme}\label{section:scheme[1AS24]}

To numerically approximate the model \eqref{eq:ait-sahalia[1AS24]}, we do a temperal discretization on a uniform mesh within the interval $[0,T]$, with 
a uniform step-size $h = \tfrac{T}{N}, N \in \N $ and grid points $t_k := kh, k \in \{ 
0,1, ..., N  \}$. 
We propose a numerical scheme as follows:
\begin{equation}\label{eq:numeri_method[1AS24]}
\begin{split}
       Y_{n+1}
      & =
       \Phi_h (Y_n) 
       +  (\alpha_{-1} Y_{n+1}^{-1} - \alpha_0 + \alpha_1 \Phi_h (Y_n) + f( \Phi_h (Y_n) )   ) h
       + g( \Phi_h (Y_n) ) \Delta W_n  \\
       & \qquad \qquad \qquad \qquad
       + \tfrac{1}{2} ( |\Delta W_n|^2 - h  )  \hat{g}( \Phi_h (Y_n) ) , \ \ n \in \{0, 1, 2, ..., N-1\},
\end{split}
\end{equation}
with $ Y_0 = x_0$, where $\Delta W_n := W_{ t_{n+1} } - W_{ t_{n} }$ is the increment of the Brownian motion and $f,g$, $\hat g$ are defined by \eqref{eq:f_and_g[1AS24]}, \eqref{eq:hat(g)[1AS24]}.
Furthermore, $\Phi_h: D \rightarrow D$ is a kind of corrective mapping depending on the time step-size $h$, required to satisfy the following assumptions.

\begin{assumption}\label{assump. of Phi[1AS24]}
    Let $f, g, \hat{g}$ be defined as 
    \eqref{eq:f_and_g[1AS24]}, \eqref{eq:hat(g)[1AS24]}.
    For all $x, y \in D$ and $h = \tfrac{T}{N}>0$, there exist 
    constants $L_1, L_2, L_3 \geq 0 $ such that
    the operator $\Phi_h: D \rightarrow D$ obeys
    \begin{align}
    \label{eq:assump. Phi<=x[1AS24]}
         | \Phi_h(x) | 
         & 
         \leq |x| ,
         \\
    \label{eq:assump._x-Phi[1AS24]}
         | x - \Phi_h(x) |
         &
         \leq
         L_1  h ( 1 + |x|^{2 r + 1} ) 
         (  1 
            \wedge
            h ( 1 + |x|^{2r} )
            ) , 
    \\
    \label{eq:assump.Phi_x-Phi_y[1AS24]}
         | \Phi_h(x) - \Phi_h(y) |
         & 
         \leq ( 1 + L_2 h ) | x - y |,
         \\
    \label{eq:assump.f-g(Phi_x)-f-g(Phi_y)[1AS24]}
          | f( \Phi_h(x) ) - f ( \Phi_h(y) ) |^2
          + &
          | \hat{g}( \Phi_h(x)) - \hat{g}( \Phi_h(y) ) |^2
          \leq
          L_3  h^{-1} | x - y |^2 .
    \end{align}
\end{assumption}

The well-posedness and preservation of positivity are obvious, by noting that solving \eqref{eq:numeri_method[1AS24]} is nothing but finding a unique positive root of a quadratic equation, explicitly given by
\begin{equation}\label{eq:Y-n-explicit-solution}
\begin{split}
    Y_{n+1}
 = &
\frac12 \Bigg[ \Phi_h (Y_n)  +  \vartheta_n h +
    g(\Phi_h (Y_n) ) \Delta W_n 
    +
    \tfrac{1}{2} ( |\Delta W_n|^2 - h  )  \hat{g}( \Phi_h (Y_n) )
\\
&  + \sqrt{ 
\Big(
\Phi_h (Y_n)  
    + \vartheta_n h +
    g(\Phi_h (Y_n) ) \Delta W_n
    +
    \tfrac{1}{2} ( |\Delta W_n|^2 - h  )  \hat{g}( \Phi_h (Y_n) )
    \Big)^2 + 4 \alpha_{-1} h }
    \Bigg] >0,
\end{split}
\end{equation} 
where for short we denote 
\[
\vartheta_n :=  - \alpha_0 + \alpha_1 \Phi_h (Y_n) + f (\Phi_h (Y_n)).
\]
As a consequence, we have the following lemma.

\begin{lem}\label{lem:wellposed_of_Numerical method[1AS24]}     
     For any step-size $h = \tfrac{T}{N} > 0$,
     the numerical scheme \eqref{eq:numeri_method[1AS24]} is well-defined and positivity preserving,
     i.e., it admits a unique positive $\{ \F_{t_n}\}_{n=0}^N $-adapted solution $\{ Y_n \}_{ n=0 }^{N}, N \in \N $ for the scheme \eqref{eq:numeri_method[1AS24]} given a positive initial data. 
\end{lem}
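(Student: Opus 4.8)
The plan is to argue by induction on $n$, exploiting that the single implicit term in \eqref{eq:numeri_method[1AS24]} reduces the update to a scalar quadratic equation whose unique positive root is exactly \eqref{eq:Y-n-explicit-solution}. The induction hypothesis at level $n$ is that $Y_n$ is a well-defined, positive, $\F_{t_n}$-measurable random variable; the base case $n=0$ holds since $Y_0 = x_0 > 0$ is deterministic and $\F_{t_0}$-measurable.

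For the inductive step, assume $Y_n \in D$. Since the corrective mapping satisfies $\Phi_h: D \to D$, the quantity $\Phi_h(Y_n)$ lies in $D$, so that $f(\Phi_h(Y_n))$, $g(\Phi_h(Y_n))$, $\hat{g}(\Phi_h(Y_n))$ and $\vartheta_n = -\alpha_0 + \alpha_1 \Phi_h(Y_n) + f(\Phi_h(Y_n))$ are all well-defined and $\F_{t_n}$-measurable. Folding the entire explicit part of the recursion into $A_n := \Phi_h(Y_n) + \vartheta_n h + g(\Phi_h(Y_n))\Delta W_n + \tfrac12(|\Delta W_n|^2 - h)\hat{g}(\Phi_h(Y_n))$, the scheme \eqref{eq:numeri_method[1AS24]} becomes the scalar relation $Y_{n+1} = A_n + \alpha_{-1} h\, Y_{n+1}^{-1}$, where $A_n$ is a finite real random variable pathwise.

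The central step is to clear the denominator: multiplying by $Y_{n+1}$ turns this relation into the quadratic $Y_{n+1}^2 - A_n Y_{n+1} - \alpha_{-1} h = 0$, and conversely any positive root of this quadratic solves the original relation, since multiplication by $Y_{n+1} \neq 0$ is reversible for positive roots. Because $\alpha_{-1} h > 0$, the discriminant $A_n^2 + 4\alpha_{-1}h$ is strictly positive, so the quadratic has two distinct real roots $\tfrac12(A_n \pm \sqrt{A_n^2 + 4\alpha_{-1}h})$; as $\sqrt{A_n^2 + 4\alpha_{-1}h} > |A_n|$, the minus-root is strictly negative while the plus-root is strictly positive. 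Hence there is exactly one positive solution, namely \eqref{eq:Y-n-explicit-solution}, which yields both existence and uniqueness at step $n+1$.

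It remains to close the induction. The positive root $Y_{n+1}$ lies in $D$; moreover $A_n$ is built from $\F_{t_n}$-measurable coefficients together with the increment $\Delta W_n = W_{t_{n+1}} - W_{t_n}$, so $A_n$ — and hence $Y_{n+1}$, being a continuous (deterministic) function of $A_n$ — is $\F_{t_{n+1}}$-measurable, restoring the hypothesis at level $n+1$. I do not expect a genuine obstacle: everything reduces to the sign of the discriminant, and the only point needing care is that the reduction to a quadratic is legitimate precisely because the induction keeps $Y_n$, hence the argument of $\Phi_h$, inside $D = (0,\infty)$, so that every term in the scheme is meaningful at each step.
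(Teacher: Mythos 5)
Your proposal is correct and takes essentially the same route as the paper: the paper's (one-line) argument likewise observes that solving \eqref{eq:numeri_method[1AS24]} amounts to finding the unique positive root of the quadratic $Y_{n+1}^2 - A_n Y_{n+1} - \alpha_{-1} h = 0$, explicitly given by \eqref{eq:Y-n-explicit-solution}. Your write-up merely makes explicit the induction, the sign analysis of the two roots, and the $\F_{t_{n+1}}$-measurability, all of which the paper leaves implicit.
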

In what follows we provide an {\color{black}example} operator $\Phi_h: D \rightarrow D$ fulfilling Assumption \ref{assump. of Phi[1AS24]}.

\begin{example}\label{example2[1AS24]}
Let $f, \hat{g}$ be denoted by \eqref{eq:f_and_g[1AS24]}, \eqref{eq:hat(g)[1AS24]} and let $r+1 \geq 2\rho$.
For any given $q \in [\tfrac{1}{2r}, \tfrac{1}{2r-2}]$ we define $\P_h: D \rightarrow D$ by
\begin{equation}\label{eq:Ph-defn}
    \P_h (x)
    :=
    \min \{ 1 , h^{ - q } |x|^{-1} \} x.
\end{equation}
\end{example}

Such a projection operator was introduced by \cite{Beyn_Isaak_Kruse:C-stability_B-consistency_JSC16,Beyn_Isaak_Kruse:C-stability_B-consistency_JSC17} to construct a projection Euler/Milstein schemes for SDEs in non-globally Lipschitz setting. Next we show that $\Phi_h = \P_h$ obeys all conditions in Assumption \ref{assump. of Phi[1AS24]}.
Firstly, one can easily confirm \eqref{eq:assump. Phi<=x[1AS24]} and \eqref{eq:assump.Phi_x-Phi_y[1AS24]} with $L_2 = 0$ by observing 
\begin{equation}\label{in example_for x-y[1AS24]}
    |\P_h(x)| \leq |x|, 
    \ \
    |\P_h(x) - \P_h(y) | \leq | x - y |, 
    \ \  
    \forall \ x,y \in D.
\end{equation}
Secondly, for all $x \geq h^{ - q} $, one has $\P_h(x) \leq h^{ - q} \leq x$. Thus for any $m > 0$ it holds that
\begin{equation}
    | x - \P_h(x) |
     \leq
       \1_{\{x \geq h^{ -q } \}} 2 | x |
       \leq
       2 h^m | x |^{ \frac{m}{q}  + 1 }  
       \leq
       2 h^m ( 1 + | x |^{ 2 m r + 1 })  ,
       \ \
       \forall \ x \in D,
\end{equation}
which confirms \eqref{eq:assump._x-Phi[1AS24]} by taking $L_1 = 2$ and $m=1$ or $m=2$.
Lastly,
since  $\P_h(x) \leq h^{ -q} $ for all $x \in D$, it is easy to obtain that for all $ x, y \in D$,
\begin{equation} \label{example. f(Phi_x)-f(Phy_y)[1AS24]}
    \begin{split}
        \big|  f( \P_h(x) ) - f( \P_h(y) )  \big|^2
        & 
        =
         \bigg|
              \int_0^1
                       f'( \theta \P_h(x) + ( 1 - \theta ) \P_h(y)  ) d\theta 
                        \cdot
                       ( \P_h(x) - \P_h(y) )
             \bigg|^2
             \\
        &
        \leq
        \alpha_2^2 r^2 
        \int_0^1 
        \big| \theta \P_h(x) + ( 1 - \theta ) \P_h(y)  \big|^{2r-2}  d\theta 
         \cdot
         \big| \P_h(x) - \P_h(y) \big|^2 
          \\
        & 
        \leq
        \alpha_2^2 r^2
        h^{ -q(2r-2) }   | x - y |^2 
         \\
        &
        \leq
        \alpha_2^2 r^2 
         h^{1 - q(2r-2)} \cdot h^{-1}  \vert x - y \vert^2
        \\
        & \leq
        \alpha_2^2 r^2
         T^{1 - q(2r-2)} \cdot h^{-1}  \vert x - y \vert^2,
    \end{split}
\end{equation}
where we used the fact that $1 - q(2r-2)  \geq  0$.
Similarly, we derive that for all $ x, y \in D$,
\begin{equation}\label{example. g(Phi_x)-g(Phy_y)[1AS24]}
    \begin{split}
        \big|  \hat{g}( \P_h(x)) - \hat{g}( \P_h(y) )  \big|^2
        & 
        \leq
        \int_0^1 
        | \hat{g}'( \theta \P_h(x) + ( 1 - \theta ) \P_h(y)  ) |^2  d\theta 
         \cdot
         |  \P_h(x) - \P_h(y) |^2 
     \\
        & 
        \leq
        \sigma^4 \rho^2 ( 2 \rho - 1 )^2 
        \int_0^1 
        | \theta \P_h(x) + ( 1 - \theta ) \P_h(y)  |^{4\rho-4}  d\theta 
         \cdot
         | \P_h(x) - \P_h(y) |^2 
         \\
        & \leq
        \sigma^4 \rho^2 ( 2 \rho - 1 )^2 
         T^{1 - q(2r-2)} \cdot h^{-1} \vert x - y \vert^2,
    \end{split}
\end{equation}
where we used the fact that $4\rho - 4 \leq 2r - 2$ in the last inequality. 
A combination of \eqref{example. f(Phi_x)-f(Phy_y)[1AS24]} and \eqref{example. g(Phi_x)-g(Phy_y)[1AS24]} confirms \eqref{eq:assump.f-g(Phi_x)-f-g(Phi_y)[1AS24]} with $L_3 = \big( \alpha_2^2 r^2  \vee \sigma^4 \rho^2 ( 2 \rho - 1 )^2 \big) T^{1 - q(2r-2)}$.

In light of the aforementioned evidence, the projection operator $\P_h$ satisfies all condtions in Assumption \ref{assump. of Phi[1AS24]}.




Armed with the above properties, we can now embark on the error analysis for the proposed scheme in the next section.



\section{Order one mean-square convergence}\label{section:error analysis[1AS24]}

In this section, we focus on the analysis of the mean-square convergence rate of the numerical scheme \eqref{eq:numeri_method[1AS24]}.
%
To begin with, we present the subsequent lemma regarding the error caused by the introduce of the corrective mapping $\Phi_h$ in $f,g$ and $\hat{g}$.

\begin{lem}\label{lemma:f_g(x)-f_g(Phi(x)) bounds[1AS24]}
    Let $f,g$ and $\hat{g}$ be defined by \eqref{eq:f_and_g[1AS24]} and \eqref{eq:hat(g)[1AS24]} satisfying $ r + 1 \geq 2 \rho$. Let Assumption \ref{assump. of Phi[1AS24]} hold. For all $x \in D$, there exists a constant $C$ independent of $h$ such that
\begin{equation}
    | f(x) - f( \Phi_h(x) )|
    \vee
    | g(x) - g( \Phi_h(x) )|
    \vee
    | \hat{g}(x) - \hat{g}( \Phi_h(x) )|
    \leq
    C h ( 1 + |x|^{ 3 r} ).
\end{equation}

\end{lem}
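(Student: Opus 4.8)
The plan is to bound each of the three differences $|f(x)-f(\Phi_h(x))|$, $|g(x)-g(\Phi_h(x))|$ and $|\hat g(x)-\hat g(\Phi_h(x))|$ by expressing it as an integral of the derivative along the segment joining $\Phi_h(x)$ to $x$, exactly as is done for the two-argument differences in \eqref{example. f(Phi_x)-f(Phy_y)[1AS24]}. For instance, writing $f(x)-f(\Phi_h(x))=\bigl(\int_0^1 f'(\theta x+(1-\theta)\Phi_h(x))\,\dd\theta\bigr)(x-\Phi_h(x))$, the derivative $f'(\xi)=-\alpha_2 r\,\xi^{r-1}$ grows polynomially, and on the segment we have $\theta x+(1-\theta)\Phi_h(x)\le x$ thanks to \eqref{eq:assump. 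Phi<=x[1AS24]}. This yields a prefactor controlled by $|x|^{r-1}$, and the displacement $|x-\Phi_h(x)|$ is controlled by Assumption~\ref{assump. of Phi[1AS24]}, specifically \eqref{eq:assump._x-Phi[1AS24]}.

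The key mechanism is the combination of the two factors. From \eqref{eq:assump._x-Phi[1AS24]} we take the first (unconditional) bound in the minimum, $|x-\Phi_h(x)|\le L_1 h(1+|x|^{2r+1})$. Multiplying the polynomial prefactor from the mean-value step by this displacement, I expect the total exponent to land at $3r$: for $f$ the prefactor scales like $1+|x|^{r-1}$ and the displacement like $1+|x|^{2r+1}$, giving a top-order term $|x|^{3r}$, and the constant $C$ absorbs $\alpha_2$, $r$ and $L_1$. The steps are therefore: (i) fix $x\in D$, invoke \eqref{eq:assump. Phi<=x[1AS24]} so that every convex combination on the segment is dominated by $x$; (ii) apply the fundamental theorem of calculus and bound $|f'|$, $|g'|$, $|\hat g'|$ by their polynomial envelopes; (iii) insert \eqref{eq:assump._x-Phi[1AS24]}; (iv) collect exponents and verify each stays at or below $3r$, using $\rho>1$ and $r+1\ge 2\rho$ to control the $g$ and $\hat g$ terms against the $f$ term.

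The analogous computation for $g(x)=\sigma x^{\rho}$ and $\hat g(x)=\rho\sigma^2 x^{2\rho-1}$ proceeds identically, with $g'$ of order $\rho-1$ and $\hat g'$ of order $2\rho-2$. The point of assuming $r+1\ge 2\rho$ is to guarantee that the diffusion-related exponents do not exceed the drift-related one, so that the single bound $1+|x|^{3r}$ dominates all three quantities simultaneously; concretely one checks $\rho-1\le r-1$ and $2\rho-2\le 2r-2<r-1+(2r+1)=3r$ hold under $\rho\le r$, which follows from $r+1\ge 2\rho$ together with $r,\rho>1$. Finally I take $C$ to be the maximum of the three resulting constants so that a single $C$ works for the stated $\vee$ of all three terms.

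The main obstacle is purely bookkeeping: one must confirm that in each of the three cases the sum of the derivative-growth exponent and the displacement exponent from \eqref{eq:assump._x-Phi[1AS24]} is no larger than $3r$, and that the low-order cross terms (arising from the ``$1+$'' pieces in the envelopes) are likewise subsumed into $C(1+|x|^{3r})$ on the bounded-from-below domain is not needed since the bound only requires domination of $|x|^{3r}$ at large $x$ and a constant at small $x$. There is no analytic difficulty; the only care required is to pick the polynomial envelopes correctly and to exploit $\Phi_h(x)\le x$ so that no negative-power or blow-up terms enter the estimate.
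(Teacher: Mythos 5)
Your proposal follows the paper's proof essentially step for step: each difference is written via the mean-value/integral identity, the convex combination on the segment is dominated using $|\Phi_h(x)|\le |x|$ from \eqref{eq:assump. Phi<=x[1AS24]}, the derivative is bounded by its polynomial envelope, the unconditional branch $|x-\Phi_h(x)|\le L_1 h(1+|x|^{2r+1})$ of \eqref{eq:assump._x-Phi[1AS24]} is inserted, and the exponents are collected, exactly as in the paper's \eqref{eq:f(x)-f(Phi(x))[1AS24]}--\eqref{hatg(x)-hatg(Phi(x))[1AS24]}. One bookkeeping correction: for $\hat g$ the total exponent is $(2\rho-2)+(2r+1)=2r+2\rho-1$, and the inequality $2\rho-2\le 2r-2$ you cite (i.e.\ $\rho\le r$) is not enough to keep this at or below $3r$ --- you need $2\rho-1\le r$, which is precisely the standing hypothesis $r+1\ge 2\rho$ that you correctly invoke elsewhere and that the paper uses at this exact point.
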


\begin{proof}
    By utilizing \eqref{eq:assump. Phi<=x[1AS24]} and \eqref{eq:assump._x-Phi[1AS24]}, it can be derived that
\begin{equation}\label{eq:f(x)-f(Phi(x))[1AS24]}
    \begin{split}
        | f(x) - f( \Phi_h(x) )|
        &
        =
        \bigg|  \int_0^1 f'(\theta x + ( 1 - \theta ) \Phi_h(x) ) d \theta 
        \cdot
        ( x - \Phi_h(x) ) \bigg| 
        \\
        &
        \leq
         \alpha_2 r \int_0^1 | \theta x + ( 1 - \theta ) \Phi_h(x) |^{r-1} d \theta \cdot | x - \Phi_h(x) |
        \\
        &                
        \leq
        \alpha_2 r |x|^{r-1} \cdot |  x - \Phi_h(x) |
        \\
        &
        \leq
        C h ( 1 + |x|^{ 3 r} ).
    \end{split}
\end{equation}
Noting that $r \geq 2\rho - 1 > \rho$, one can similarly deduce that
\begin{equation}\label{eq:g(x)-g(Phi(x))[1AS24]}
        | g(x) - g( \Phi_h(x) )|
        \leq
        \rho \sigma |x|^{\rho - 1} \cdot |  x - \Phi_h(x) |
        \leq
        C h ( 1 + |x|^{ 2 r + \rho } )
        \leq
        C h ( 1 + |x|^{ 3 r } ),
\end{equation}
and
\begin{equation}\label{hatg(x)-hatg(Phi(x))[1AS24]}
        | \hat{g}(x) - \hat{g}( \Phi_h(x) )|
        \leq
        \rho ( 2\rho - 1 ) \sigma^2   |x|^{2\rho - 2} \cdot |  x - \Phi_h(x) |
        \leq
        C h  ( 1 + |x|^{  3 r } ).
\end{equation}
The desired assertion can be achieved by combining \eqref{eq:f(x)-f(Phi(x))[1AS24]}, \eqref{eq:g(x)-g(Phi(x))[1AS24]} with \eqref{hatg(x)-hatg(Phi(x))[1AS24]}.

\end{proof}

Noting that 
$$ \int_{t_n}^{t_{n+1}} \int_{t_n}^{s} d W_l dW_s 
= 
\tfrac{     1}{2} \big( |\Delta W_n|^2 - h  \big) ,  
\ \
 n \in \{0, 1, 2, ..., N-1\},$$
one can rewrite
\eqref{eq:ait-sahalia[1AS24]} as: 
\begin{equation}\label{eq:ait_sahalia_gridpointVersion[1AS24]}
\begin{split}
        X_{ t_{n+1} }
        = 
        \Phi_h( X_{ t_{n} }) 
      & + 
        ( \alpha_{-1} X_{ t_{n+1} }^{-1} - \alpha_0 + \alpha_1 \Phi_h( X_{ t_{n} } ) + f( \Phi_h( X_{ t_{n} } ) )   ) h
        + 
        g( \Phi_h(X_{ t_{n} }) ) \Delta W_n  \\
      & \quad 
        + 
        \tfrac{1}{2} ( |\Delta W_n|^2 - h  )  \hat{g}( \Phi_h(X_{ t_{n} }) )
        + 
        R_{n+1}, \quad \forall  \ n \in \{ 0,1,..., N-1 \},
\end{split}
\end{equation}
where we denote
\begin{equation}\label{R_i[1AS24]}
\begin{split}
    R_{n+1} 
    := 
  & 
    \int_{t_n}^{ t_{n+1} } 
    [ \alpha_{-1} X_{s}^{-1} - \alpha_{-1} X_{ t_{n+1} }^{-1} + \alpha_1 X_s -  \alpha_1  \Phi_h( X_{ t_{n} } ) + f(X_s) - f( \Phi_h( X_{ t_{n} } ) ) ] 
    ds \\
    & \quad 
    + 
    \int_{t_n}^{ t_{n+1} }
    [  g(X_s) - g( \Phi_h( X_{ t_{n} } ) )  
       - 
       \hat{g}( \Phi_h( X_{ t_{n} } ) ) ( W_s - W_{t_n} )
        ] d W_s
     +
     X_{ t_{n} } - \Phi_h( X_{ t_{n} } ) .
\end{split}
\end{equation}
We would like to highlight that the introduction of the remainder term $R_n$ plays a crucial role in obtaining the expected convergence rate.
First, we need to estimate $\| R_i \|_{L^2(\Omega;\R)}$ and $\| \E (R_i| \F_{t_{i-1}}) \|_{L^2(\Omega;\R)}, i\in \{1,2,...,N\}$.

\begin{lem}\label{lem. R_n estimates [1AS24]}
    Let $\{ X_t \}_{ t\in [0,T] }$ and $\{ Y_n \}_{n=0}^N $ be the solutions of \eqref{eq:ait-sahalia[1AS24]} and \eqref{eq:numeri_method[1AS24]}, respectively. Let Assumption \ref{assump. of Phi[1AS24]} hold. 
    If one of the following conditions stands:
\begin{enumerate}[(1)]
    \item $r + 1 > 2 \rho$,
    \item \label{enumerate:theorem_critical_case[1AS24]}
    $r + 1 = 2 \rho$, $\tfrac{\alpha_2}{\sigma^2} \geq 4r + \tfrac12$,
\end{enumerate}
    then there exists a uniform constant $C$ independent of $h$, such that for all $n \in \{0,1,...,N-1 \} $, $N \in \N$,
    \begin{equation}
         \| R_{n+1} \|_{L^2(\Omega;\R)}
        \leq
        C h^{\frac{3}{2}},
        \quad
        \| \E [ R_{n+1} | \F_{t_n} ] \|_{L^2(\Omega;\R)}
        \leq
        C h^2.
    \end{equation}
\end{lem}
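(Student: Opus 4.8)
The plan is to estimate $R_{n+1}$ through the natural three-part splitting suggested by its definition \eqref{R_i[1AS24]}: the Lebesgue (drift-type) integral $I_1$, the It\^o (diffusion-type) integral $I_2$, and the projection defect $I_3 := X_{t_n} - \Phi_h(X_{t_n})$. I would treat the two claimed bounds in parallel, since the same decomposition serves both and the sharper conditional bound comes from one clean structural fact about $I_2$.

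For the $L^2$-bound $\|R_{n+1}\|_{L^2(\Omega;\R)} \leq C h^{3/2}$, I would argue term by term. For $I_1$ I move the norm inside the integral and split each integrand into a time-increment piece and a projection piece (inserting $X_{t_n}$ in the $\alpha_1 X$ and $f$ terms; the $\alpha_{-1}X^{-1}$ term is already a difference of $X^{-1}$ at two times in $[t_n,t_{n+1}]$). The time-increment pieces are $O(h^{1/2})$ in $L^2$ by the H\"older-continuity Lemmas \ref{lem:Holder_continuous_k+1>2rho[1AS24]}, \ref{lem:Holder_continuous_k+1=2rho[1AS24]} (for $X$ and $X^{-1}$) and Lemma \ref{lem:f_L^2_bounds[1AS24]} (for $f$), while the projection pieces are $O(h)$ by Lemma \ref{lemma:f_g(x)-f_g(Phi(x)) bounds[1AS24]}; integrating over an interval of length $h$ gives $O(h^{3/2})$. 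For $I_2$ I apply the It\^o isometry, reducing matters to showing that the integrand $E_s := g(X_s) - g(\Phi_h(X_{t_n})) - \hat g(\Phi_h(X_{t_n}))(W_s - W_{t_n})$ satisfies $\|E_s\|_{L^2(\Omega;\R)} \leq C h$, so that $\|I_2\|_{L^2(\Omega;\R)}^2 = \int_{t_n}^{t_{n+1}} \|E_s\|_{L^2(\Omega;\R)}^2\,ds \leq C h^3$. Finally, for $I_3$ I use the sharp form of \eqref{eq:assump._x-Phi[1AS24]}, bounding the factor $1 \wedge h(1+|x|^{2r})$ by $h(1+|x|^{2r})$ to obtain $|I_3| \leq C h^2 (1 + |X_{t_n}|^{4r+1})$, whence $\|I_3\|_{L^2(\Omega;\R)} \leq C h^2$ via the moment bounds.

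For the conditional bound $\|\E[R_{n+1}\mid\F_{t_n}]\|_{L^2(\Omega;\R)} \leq C h^2$, the key observation is that $E_s$ is $\F_s$-adapted and square-integrable, so $I_2$ is a martingale increment and $\E[I_2\mid\F_{t_n}] = 0$; this is exactly where the extra half order over the pathwise estimate is gained. The defect $I_3$ is $\F_{t_n}$-measurable and already $O(h^2)$. It remains to control $\E[I_1\mid\F_{t_n}]$: interchanging the conditional expectation with $\int ds$ and writing, for $\psi \in \{x^{-1}, \mathrm{id}, f\}$, $\E[\psi(X_s) - \psi(X_{t_n}) \mid \F_{t_n}] = \E\big[\int_{t_n}^{s} \mathcal{L}\psi(X_u)\,du \,\big|\, \F_{t_n}\big]$, so that the stochastic-integral part of each increment drops out. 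Each such conditional drift is $O(s-t_n)$ in $L^2$ uniformly in $n$ (the required $L^2$-bounds on $\mathcal{L}\psi(X_u)$ follow from the moment lemmas), while the frozen projection corrections $X_{t_n}-\Phi_h(X_{t_n})$ and $f(X_{t_n})-f(\Phi_h(X_{t_n}))$ are $\F_{t_n}$-measurable and $O(h)$ in $L^2$; integration over $[t_n,t_{n+1}]$ turns both into $O(h^2)$. For the $\alpha_{-1}X^{-1}$ term one rewrites the increment against $X_{t_{n+1}}$ as $-\int_s^{t_{n+1}}\mathcal{L}(x^{-1})(X_u)\,du$, handled identically.

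I expect two steps to carry the real difficulty. The first is the Milstein remainder estimate $\|E_s\|_{L^2(\Omega;\R)} \leq Ch$: one must expand $g(X_s)-g(X_{t_n})$ by It\^o's formula and use the identity $g'g = \hat g$ to cancel the leading stochastic term $\hat g(X_{t_n})(W_s-W_{t_n})$, leaving $\int_{t_n}^{s}[\hat g(X_u)-\hat g(X_{t_n})]\,dW_u$ plus a finite-variation remainder, both $O(s-t_n)$ in $L^2$ by the It\^o isometry together with Lemma \ref{lem:f_L^2_bounds[1AS24]}; the projection errors in $g$ and $\hat g$ are absorbed through Lemma \ref{lemma:f_g(x)-f_g(Phi(x)) bounds[1AS24]}. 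The second, and the genuinely delicate bookkeeping, is the moment accounting in the critical case $r+1=2\rho$: by Lemma \ref{lem.bound.case2[1AS24]} positive moments of $X_t$ are finite only up to order $1+2\alpha_2/\sigma^2$, and the highest power occurring anywhere is $|X_{t_n}|^{4r+1}$ in $I_3$, which after squaring demands $\E[|X_{t_n}|^{8r+2}]<\infty$. This is precisely why the hypothesis $\alpha_2/\sigma^2 \geq 4r + \tfrac12$ is imposed: it guarantees $8r+2 \leq 1 + 2\alpha_2/\sigma^2$ and thereby makes every moment invoked above finite and uniformly bounded in $n$.
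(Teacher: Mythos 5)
Your proposal is correct and follows essentially the same route as the paper's proof: the identical $I_1$/$I_2$/$I_3$ splitting of $R_{n+1}$, the It\^o-isometry plus Milstein-remainder expansion of $I_2$ (with the $g'g=\hat g$ cancellation and the projection errors absorbed via Lemma \ref{lemma:f_g(x)-f_g(Phi(x)) bounds[1AS24]}), the vanishing conditional expectation of the stochastic integral, and the generator/It\^o-formula bound for the conditional drift terms (your $\mathcal{L}\psi$ formulation is exactly the paper's double time-integral estimate, including the right-anchored treatment of the $\alpha_{-1}X^{-1}$ term). Your moment accounting also matches the paper's: the binding power is $|X_{t_n}|^{4r+1}$ from $I_3$, squared to order $8r+2$, which is precisely why $\tfrac{\alpha_2}{\sigma^2}\geq 4r+\tfrac12$ is assumed so that Lemma \ref{lem.bound.case2[1AS24]} applies in the critical case.
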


\begin{proof}

By the Minkowski inequality, we split $\| R_i \|_{L^2(\Omega;R)},i\in \{1,2,...,N\}$ into three terms as
  \begin{equation}\label{eq:R_i(r+1>2rho)[1AS24]}
    \begin{split}
        \| R_i \|_{L^2(\Omega;\R)}
        & \leq
         \underbrace{
         \bigg\|   
                 \int_{t_{i-1}}^{t_i}  
                        [ \alpha_{-1} X_{s}^{-1} - \alpha_{-1}   X_{ t_{i} } ^{-1} + \alpha_1 X_s -  \alpha_1 \Phi_h(  X_{ t_{i-1} } ) + f(X_s) - f( \Phi_h(  X_{ t_{i-1} } ) ) ]  ds  
         \bigg\|_{L^2(\Omega;\R)} 
                                    }_{ = : I_1 }   \\
        & \qquad
         +   
         \underbrace{
         \bigg\|   
                  \int_{t_{i-1}}^{t_i} 
                     [ g(X_s) - g( \Phi_h(  X_{ t_{i-1} } ) )  - \hat{g}( \Phi_h(  X_{ t_{i-1} } ) ) ( W_s - W_{ t_{i-1} } ) ] dW_s  
         \bigg\|_{L^2(\Omega;\R)}   
                                       }_{ = : I_2 }  \\
        & \qquad
         +
         \underbrace{
         \|     X_{ t_{i-1} } - \Phi_h(  X_{ t_{i-1} } )    \|_{L^2(\Omega;\R)}    
                                           }_{ = : I_3 }     .
    \end{split}
    \end{equation}
For the term $I_1$, 
{\color{black}by adding and subtracting the terms 
$h \alpha_1 X_{t_{i-1}}$ and $h f(X_{t_{i-1}})$, and further} 
utilizing 
\eqref{eq:assump._x-Phi[1AS24]} and
Lemma \ref{lemma:f_g(x)-f_g(Phi(x)) bounds[1AS24]}
we derive that
\begin{equation}\label{eq:I_1[1AS24]}
\begin{split}
    I_1
    &
    \leq
    \int_{t_{i-1}}^{t_i}
    \|  \alpha_{-1} X_{s}^{-1} - \alpha_{-1} X_{ t_{i} } ^{-1} \|_{L^2(\Omega;\R)} 
    ds
    +
    \int_{t_{i-1}}^{t_i}
    \| \alpha_1 X_s - \alpha_1  X_{ t_{i-1} }  \|_{L^2(\Omega;\R)}
    ds
    \\
    & \qquad 
     +
    h  \| \alpha_1  X_{ t_{i-1} } -  \alpha_1 \Phi_h(  X_{ t_{i-1} } ) 
        \|_{L^2(\Omega;\R)}
    +
    \int_{t_{i-1}}^{t_i}
    \| f(X_s) - f( X_{ t_{i-1} } )  \|_{L^2(\Omega;\R)}
    ds  
    \\
    & \qquad 
    +
    h
    \|  f( X_{ t_{i-1} } ) - f( \Phi_h(  X_{ t_{i-1} } ) )
    \|_{L^2(\Omega;\R)}
    \\
    &   
    \leq
    \alpha_{-1}
    \int_{t_{i-1}}^{t_i}
     \|   X_{s}^{-1} - X_{ t_{i} } ^{-1} \|_{L^2(\Omega;\R)} 
    ds
    +
    \alpha_1 
    \int_{t_{i-1}}^{t_i}
    \|  X_s -  X_{ t_{i-1} }  \|_{L^2(\Omega;\R)}
    ds
        \\
    & \qquad 
    +
    \int_{t_{i-1}}^{t_i}
    \| f(X_s) - f( X_{ t_{i-1} } )  \|_{L^2(\Omega;\R)}
    ds  
    +
    C h^{ 2 }
    \Big(   1 + \sup_{ t \in [0,T] } \| X_t \|^{ 3 r }_{ L^{ 6 r }(\Omega;\R) }   \Big)
     \\
    & 
    \leq  
    C h^{\frac{3}{2}} 
    \Big(   1 +  h^{\frac{1}{2}} \cdot \sup_{ t \in [0,T] } \| X_t \|^{ 3 r }_{ L^{ 6 r }(\Omega;\R) }   \Big),
\end{split}
\end{equation}
where we used Lemmas \ref{lem:Holder_continuous_k+1>2rho[1AS24]}, \ref{lem:Holder_continuous_k+1=2rho[1AS24]} and \ref{lem:f_L^2_bounds[1AS24]} 
in the last inequality.
   
For the term $I_2$, to simplify the denotation, we denote
\begin{equation}
    F(x) := \alpha_{-1} x^{-1} - \alpha_0 + \alpha_1 x - \alpha_2 x^r, 
    \ \
    x \in D.
\end{equation}
{\color{black}
By adding and subtracting the terms 
$g(X_{t_{i-1}})$ and $\hat{g}( X_{ t_{i-1} } )( W_s - W_{ t_{i-1} } )$ 
to the integrand of $I_2$, 
applying It\^o formula to $ g(x) = \sigma x^\rho$
and
making use of the H\"older inequality as well as the It\^o isometry} 
we get
\begin{equation}\label{eq:I_2_pre[1AS24]}
    \begin{split}
        |I_{2}|^2
        & \leq
        2 \int_{t_{i-1}}^{t_i}
         \| g(X_s) - g( X_{ t_{i-1} } ) 
            -
            \hat{g}( X_{ t_{i-1} } )( W_s - W_{ t_{i-1} } ) \|^2_{L^2(\Omega;\R)}
         ds
         \\ & \quad 
         +
         2 \int_{t_{i-1}}^{t_i}
         \| g(  X_{ t_{i-1} } ) - g( \Phi_h( X_{ t_{i-1} }) )
            +
            ( \hat{g}( X_{ t_{i-1} }) - \hat{g}( \Phi_h( X_{ t_{i-1} }) ) ) ( W_s - W_{t_{i-1}} )
            \|^2_{L^2(\Omega;\R)} 
        ds      
        \\
        & =
        2 \int_{t_{i-1}}^{t_i}
         \bigg\| 
             \int_{t_{i-1}}^{s}
             \Big( g'(X_l) F(X_l) + \frac{1}{2} g''(X_l) g^2(X_l) \Big)
             d l
             +
             \int_{t_{i-1}}^{s}
             \Big( \hat{g}(X_l) - \hat{g}( X_{ t_{i-1} }) \Big)
             d W_l
         \bigg\|^2_{L^2(\Omega;\R)}
         ds
         \\ & \quad 
         +
         2 \int_{t_{i-1}}^{t_i}
         \| g(  X_{ t_{i-1} } ) - g( \Phi_h( X_{ t_{i-1} }) )
            +
            ( \hat{g}( X_{ t_{i-1} }) - \hat{g}( \Phi_h( X_{ t_{i-1} }) ) ) ( W_s - W_{t_{i-1}} )
            \|^2_{L^2(\Omega;\R)} 
        ds \\
         & \leq
         4 h 
           \int_{t_{i-1}}^{t_i}
           \int_{t_{i-1}}^{s}
              \|  g'(X_l) F(X_l) + \frac{1}{2} g''(X_l) g^2(X_l)  
                                                  \|^2_{L^2(\Omega;\R)}  dl ds  \\
             & \quad 
         + 
         4 
           \int_{t_{i-1}}^{t_i}
            \int_{t_{i-1}}^{s}
            \|  \hat{g}(X_l) - \hat{g}( X_{ t_{i-1} }) 
                                                   \|^2_{L^2(\Omega;\R)} dl ds  \\
            & \quad                                             
         +
         4 h \|g(  X_{ t_{i-1} } ) - g( \Phi_h( X_{ t_{i-1} }) )\|^2_{L^2(\Omega;\R)} 
         +
         4 h^2 \| \hat{g}( X_{ t_{i-1} }) - \hat{g}( \Phi_h( X_{ t_{i-1} }) ) \|^2_{L^2(\Omega;\R)} 
         \\
         & \leq
            C h^3
           \Big( 
               1
               +
               \1_{\{\rho < 2\}}
               \sup_{ t \in [0,T] } 
               \Vert X_t^{-1} \Vert_{ L^{ 4 - 2 \rho }(\Omega;\R) } ^{4 - 2 \rho}
               +
               \sup_{ t \in [0,T] } 
               \Vert X_t \Vert_{ L^{ 2(r + \rho - 1) }(\Omega;\R) } ^{2(r + \rho - 1)}
               +
               \sup_{ t \in [0,T] }
            \Vert X_t \Vert_{ L^{ 6 r }(\Omega;\R) } ^{ 6 r }
            \Big) ,
    \end{split}
\end{equation}
where we used Lemmas \ref{lem:f_L^2_bounds[1AS24]}, 
\ref{lemma:f_g(x)-f_g(Phi(x)) bounds[1AS24]}
in the last inequality.
Observing that $4 - 2 \rho < 2$ and $2(r + \rho - 1) < 6 r $ and using the Lyapunov inequality imply that
\begin{equation*}
    \1_{\{\rho < 2\}} \Vert X_t^{-1} \Vert_{ L^{ 4 - 2 \rho }(\Omega;\R) }
    \leq
    \Vert X_t^{-1} \Vert_{ L^{ 2 }(\Omega;\R) },
    \quad
    \Vert X_t \Vert_{ L^{ 2(r + \rho - 1) }(\Omega;\R) }
    \leq
    \Vert X_t \Vert_{ L^{ 6 r }(\Omega;\R) },
    \quad
    \forall \ t \in [0,T].
\end{equation*}
As a result, one can deduce from \eqref{eq:I_2_pre[1AS24]} that
\begin{equation}\label{eq:I_2[1AS24]}
\begin{aligned}
          |I_{2}|^2
          & \leq
          C h^3
           \Big( 
               1
               +
               \Vert X_t^{-1} \Vert_{ L^{ 2 }(\Omega;\R) }
               +
               \sup_{ t \in [0,T] } 
               \Vert X_t \Vert_{ L^{ 6 r }(\Omega;\R) } ^{6 r}
            \Big) 
           .
\end{aligned}
\end{equation}
In view of \eqref{eq:assump._x-Phi[1AS24]}, one infers
\begin{equation}\label{I_3[1AS24]}
    I_3 
    \leq
    C h^{2} 
    \Big( 1 
          +
          \sup_{ t \in [0,T] } \| X_t \|^{ 4r+1 }_{L^{ 8r+2 }(\Omega;\R)}
    \Big).
\end{equation}
The Lyapunov inequality together with the fact that $ 6 r < 8 r + 2 $ helps us obtain
\begin{equation*}
    \Vert X_t \Vert_{ L^{ 6r }(\Omega;\R) }
    \leq
    \Vert X_t \Vert_{ L^{ 8 r + 2 }(\Omega;\R) },
    \quad
    \forall \ t \in [0,T].
\end{equation*}
A combination of \eqref{eq:I_1[1AS24]}, \eqref{eq:I_2[1AS24]} with \eqref{I_3[1AS24]} yields
\begin{equation}\label{eq:R_i_L2_estimate[1AS24]}
    \| R_i \|_{L^2(\Omega;\R)} 
    \leq
      C h^{\frac{3}{2}}
      \Big( 
          1 
       +
         \Vert X_t^{-1} \Vert_{ L^{ 2 }(\Omega;\R) }
       +
         \sup_{ t \in [0,T] } \| X_t \|^{ 4r+1 }_{L^{ 8r+2 }(\Omega;\R)}
       \Big).
\end{equation}
Since $\tfrac{\alpha_2}{\sigma^2} \geq 4r + \tfrac12$ implies $8r + 2 \leq \tfrac{\sigma^2 + 2 \alpha_2}{\sigma^2}$, by Lemma \ref{lem:moment_case1[1AS24]} and Lemma \ref{lem.bound.case2[1AS24]} we arrive at 
    \begin{equation}
         \| R_{n+1} \|_{L^2(\Omega;\R)}
        \leq
        C h^{\frac{3}{2}}, 
        \quad
        \forall \ n \in \{0,1,...,N-1 \} , N \in \N,
    \end{equation}
for the non-critical case $r + 1 > 2 \rho$ and the general critical case $r + 1 = 2 \rho$ with $\tfrac{\alpha_2}{\sigma^2} \geq 4r + \tfrac12$.

Now we turn to the estimations for $\| \E (R_i| \F_{i-1}) \|_{L^2(\Omega;\R)}, i\in \{1,2,...,N\}$. Using basic properties of the conditional expectation, one has
\begin{equation}\label{eq:I_2 vanish[1AS24]}
        \E  
        \bigg(  
         \int_{t_{i-1}}^{t_i} 
             \big[ g(X_s) - g( \Phi_h(  X_{ t_{i-1} } ) ) 
             -
             \hat{g}( \Phi_h(  X_{ t_{i-1} } ) ) (W_s - W_{t_{i-1}})
         \big] dW_s 
         \Big| \F_{t_{i-1}} 
         \bigg)
        =
        0.
\end{equation}
Therefore, 
{\color{black}
by adding and subtracting the terms
$\alpha_1 X_{t_{i-1}}$
and
$f(X_{t_{i-1}})$
to the integrands of $R_i$,
and using the It\^o formula twice to $\alpha_{-1}x^{-1} $ and $ \alpha_1 x + f(x),  x \in D$,
noting the fact that the It\^o integrals vanish under the conditional expectation}
and applying the H\"older inequality, one can show that 
\begin{equation}\label{eq:Ri|F_i-1<Ch^2[1AS24]}
\begin{split}
    \| \E (R_i| \F_{i-1}) \|_{L^2(\Omega;\R)}
    &
    \leq
       \|     X_{ t_{i-1} } - \Phi_h(  X_{ t_{i-1} } )    \|_{L^2(\Omega;\R)}   
    +
       \int_{t_{i-1}}^{t_i}
       \bigg\|  \E\Big( \alpha_{-1} X_{s}^{-1} - \alpha_{-1} X_{ t_{i} } ^{-1} \big| \F_{t_{i-1}} \Big) 
       \bigg\|_{L^2(\Omega;\R)} 
       ds
    \\ 
    & \quad
    +
    \int_{t_{i-1}}^{t_i}
    \bigg\| \E\Big( 
        \alpha_1 X_s + f(X_s) 
        - 
        \alpha_1  X_{ t_{i-1} } 
        - 
        f( X_{ t_{i-1} } ) 
        \big| \F_{t_{i-1}} 
    \Big)  \bigg\|_{L^2(\Omega;\R)}
    ds 
    \\
    & \quad
    +
    \int_{t_{i-1}}^{t_i}
    \big\| \alpha_1  X_{ t_{i-1} } -  \alpha_1 \Phi_h(  X_{ t_{i-1} } )
        +
        f( X_{ t_{i-1} } ) - f( \Phi_h(  X_{ t_{i-1} } ) )
    \big\|_{L^2(\Omega;\R)}
    ds
        \\
    & 
    \leq
        ( 1 + \alpha_1 h ) \|   X_{ t_{i-1} } -  \Phi_h(  X_{ t_{i-1} } ) \|_{L^2(\Omega;\R)} 
     \\
     & \quad
     +
     \int_{t_{i-1}}^{t_i}  
     \int_{s}^{t_i} 
           \big\|  
               \alpha_{-1} X_l^{-2} F(X_l) - \alpha_{-1} X_l^{-3} g^2(X_l) 
            \big\|_{L^2(\Omega;\R)} dlds  
                                  \\
    & \quad 
    + 
     \int_{t_{i-1}}^{t_i}  
     \int_{t_{i-1}}^{s} 
          \big\| ( \alpha_1 + f'(X_l) )  F(X_l) 
                + 
                \tfrac{1}{2} f''(X_l) g^2(X_l) 
                                            \big\|_{L^2(\Omega;\R)} dlds 
                                                \\ 
    & \quad 
     +
     h \big\| f(  X_{ t_{i-1} } ) - f( \Phi_h(  X_{ t_{i-1} } ) ) \big\|_{L^2(\Omega;\R)} 
     \\
     &\leq
         C h^2
         \Big( 1 
               +
             \sup_{ t \in [0,T] } \big\| X_t^{-3} \big\|_{ L^{2}(\Omega;\R) }
              +
             \sup_{ t \in [0,T] } \big\| X_t \big\|^{4r+1}_{ L^{8r+2}(\Omega;\R) }    \Big)   ,
\end{split}
\end{equation}
where we also used \eqref{eq:assump._x-Phi[1AS24]} and Lemma \ref{lemma:f_g(x)-f_g(Phi(x)) bounds[1AS24]} in the last inequality.
Similar to \eqref{eq:R_i_L2_estimate[1AS24]},
employing Lemma \ref{lem:moment_case1[1AS24]} for the non-critical case and Lemma \ref{lem.bound.case2[1AS24]} for the general critical case with $\tfrac{\alpha_2}{\sigma^2} \geq 4r + \tfrac12$ leads to
    \begin{equation}
        \| \E [ R_{n+1} | \F_n ] \|_{L^2(\Omega;\R)}
        \leq
        C h^2, 
        \quad
        \forall \ n \in \{0,1,...,N-1 \} , N \in \N.
    \end{equation}
The proof is thus completed.
\end{proof}

At the moment, we are well prepared to identify  the expected order-one mean-square convergence of the novel schemes.

\begin{thm}\label{thm(main):Order_one[1AS24]}
    Let $\{ X_t \}_{ t\in [0,T] }$ and $\{ Y_n \}_{n=0}^N $ be the solutions of \eqref{eq:ait-sahalia[1AS24]} and \eqref{eq:numeri_method[1AS24]}, respectively. Let Assumption 
    \ref{assump. of Phi[1AS24]} 
    hold. 
If one of the following conditions stands:
\begin{enumerate}[(1)]
    \item $r + 1 > 2 \rho$,
    \item $r + 1 = 2 \rho$, $\tfrac{\alpha_2}{\sigma^2} 
    \geq
    4r + \tfrac12$,
\end{enumerate}
    then there exists a uniform constant $C$ independent of $h$, such that for all $n \in \{1,2,...,N \} $, $N \in \N$,
\begin{equation}
    \| X_{ t_{n} } - Y_n \|_{L^2(\Omega;\R)} 
    \leq C 
    h.
\end{equation}
\end{thm}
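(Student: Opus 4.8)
The plan is to derive a one-step recursion for the mean-square error $e_n := X_{t_n} - Y_n$ and close it with a discrete Gr\"onwall argument, starting from $e_0 = 0$. Subtracting the scheme \eqref{eq:numeri_method[1AS24]} from the grid-point representation \eqref{eq:ait_sahalia_gridpointVersion[1AS24]} and moving the implicit term to the left gives
\begin{equation*}
e_{n+1} - \alpha_{-1}\big(X_{t_{n+1}}^{-1} - Y_{n+1}^{-1}\big)h = D_n + R_{n+1},
\end{equation*}
where $D_n$ collects the explicit increments and splits as $D_n = P_n + M_n + N_n$, with the $\F_{t_n}$-measurable part $P_n := (1+\alpha_1 h)(\Phi_h(X_{t_n}) - \Phi_h(Y_n)) + h\,(f(\Phi_h(X_{t_n})) - f(\Phi_h(Y_n)))$, the diffusion increment $M_n := (g(\Phi_h(X_{t_n})) - g(\Phi_h(Y_n)))\Delta W_n$, and the Milstein increment $N_n := \tfrac12(|\Delta W_n|^2 - h)(\hat g(\Phi_h(X_{t_n})) - \hat g(\Phi_h(Y_n)))$. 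Since $x\mapsto x^{-1}$ is decreasing, the implicit term obeys $\alpha_{-1}(X_{t_{n+1}}^{-1} - Y_{n+1}^{-1})\,e_{n+1}\le 0$, so that $|e_{n+1}|^2 \le |D_n + R_{n+1}|^2$; this removes the implicitness from the analysis entirely.

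First I would estimate $\E|D_n|^2$. Because $\E[M_n|\F_{t_n}] = \E[N_n|\F_{t_n}] = 0$ and, by the vanishing of the first and third moments of the Gaussian increment, $\E[M_n N_n|\F_{t_n}] = 0$, all cross terms with $P_n$ drop out and $\E|D_n|^2 = \E|P_n|^2 + \E|M_n|^2 + \E|N_n|^2$. Expanding $\E|P_n|^2 + \E|M_n|^2$ produces the combination $2h\,\E[(\Phi_h(X_{t_n}) - \Phi_h(Y_n))(f(\Phi_h(X_{t_n})) - f(\Phi_h(Y_n)))] + h\,\E|g(\Phi_h(X_{t_n})) - g(\Phi_h(Y_n))|^2$, to which I apply the monotonicity estimate \eqref{eq:monotonicity[1AS24]}. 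Using $\upsilon > 2$ this yields the expected $(1+Ch)\E|e_n|^2$ bound (the remaining $O(h)$ pieces being controlled by \eqref{eq:assump.Phi_x-Phi_y[1AS24]} and \eqref{eq:assump.f-g(Phi_x)-f-g(Phi_y)[1AS24]}, and $\E|N_n|^2 = \tfrac{h^2}{2}\E|\hat g(\Phi_h(X_{t_n})) - \hat g(\Phi_h(Y_n))|^2$ likewise) but, crucially, a \emph{spare} negative term $-(\upsilon - 2)h\,\E|g(\Phi_h(X_{t_n})) - g(\Phi_h(Y_n))|^2$. Retaining this dissipation is the heart of the argument.

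The remaining work is the cross term $2\E[D_n R_{n+1}]$. For $P_n$, I write $\E[P_n R_{n+1}] = \E[P_n\,\E[R_{n+1}|\F_{t_n}]]$ and use $\|\E[R_{n+1}|\F_{t_n}]\|_{L^2}\le Ch^2$ from Lemma~\ref{lem. R_n estimates [1AS24]} to bound it by $Ch^2\|e_n\|_{L^2}$, hence $O(h^3)$ after Young. For $N_n$, centering $R_{n+1}$ and using $\E[N_n^2|\F_{t_n}] = \tfrac{h^2}{2}|\hat g(\Phi_h(X_{t_n})) - \hat g(\Phi_h(Y_n))|^2$ together with $\|R_{n+1}\|_{L^2}\le Ch^{3/2}$ and \eqref{eq:assump.f-g(Phi_x)-f-g(Phi_y)[1AS24]} again gives $O(h^3)$. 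The genuinely delicate term is $2\E[M_n R_{n+1}]$: because $g$ grows superlinearly, the factor $\|g(\Phi_h(X_{t_n})) - g(\Phi_h(Y_n))\|_{L^2}$ is only controllable by $Ch^{-1/4}\|e_n\|_{L^2}$ (extracted from \eqref{eq:monotonicity[1AS24]}), so a crude Cauchy--Schwarz loses the order. My resolution is to split $R_{n+1}$ into the drift, stochastic and projection pieces of \eqref{R_i[1AS24]}. The projection piece is $\F_{t_n}$-measurable and contributes nothing after averaging $\Delta W_n$; for the stochastic piece $\int_{t_n}^{t_{n+1}}\psi_s\,dW_s$, with $\psi_s := g(X_s) - g(\Phi_h(X_{t_n})) - \hat g(\Phi_h(X_{t_n}))(W_s - W_{t_n})$, the conditional It\^o isometry yields $\E[M_n\int_{t_n}^{t_{n+1}}\psi_s\,dW_s\,|\,\F_{t_n}] = (g(\Phi_h(X_{t_n})) - g(\Phi_h(Y_n)))\int_{t_n}^{t_{n+1}}\E[\psi_s|\F_{t_n}]\,ds$, whose $\F_{t_n}$-measurable integral factor is $O(h^2)$ in $L^2$ (by It\^o's formula with the moment bounds and by Lemma~\ref{lemma:f_g(x)-f_g(Phi(x)) bounds[1AS24]}); for the drift piece I center and apply conditional Cauchy--Schwarz with $\|R_{n+1}\|_{L^2}\le Ch^{3/2}$. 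In both cases I use Young's inequality with weight proportional to $h$, so that the arising $(\upsilon-2)h\,\E|g(\Phi_h(X_{t_n})) - g(\Phi_h(Y_n))|^2$ is exactly cancelled by the spare dissipation from $\E|D_n|^2$, leaving only $O(h^3)$.

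Assembling these estimates together with $\E|R_{n+1}|^2\le Ch^3$ gives the recursion $\E|e_{n+1}|^2 \le (1 + Ch)\,\E|e_n|^2 + Ch^3$, and the discrete Gr\"onwall inequality with $e_0 = 0$ produces $\E|e_n|^2 \le Ch^2$, which is the claimed order one. The main obstacle is precisely the interaction $2\E[M_n R_{n+1}]$ between the superlinear diffusion error and the remainder, and the key mechanism that makes it work is that the \emph{strict} inequality $\upsilon > 2$ in \eqref{eq:monotonicity[1AS24]} leaves exactly enough diffusion dissipation to absorb the otherwise divergent $h^{-1/2}$-type contribution, which is also what allows the whole analysis to proceed without any a priori moment bound on the numerical solution $\{Y_n\}$.
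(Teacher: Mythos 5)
Your proposal is correct, and its skeleton coincides with the paper's proof: the same error equation with the implicit term moved to the left and discarded via the monotonicity of $x \mapsto x^{-1}$, the same orthogonality relations for the Gaussian increments killing the cross terms among $P_n$, $M_n$, $N_n$, the same application of the monotonicity condition \eqref{eq:monotonicity[1AS24]} at the projected points combined with \eqref{eq:assump.Phi_x-Phi_y[1AS24]} and \eqref{eq:assump.f-g(Phi_x)-f-g(Phi_y)[1AS24]}, the same pairing of the $\F_{t_n}$-measurable part against $\E[R_{n+1}\mid \F_{t_n}]$ using Lemma \ref{lem. R_n estimates [1AS24]}, and the same iteration ending in $\E[|e_n|^2]\le Ch^2$. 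The one place you genuinely diverge is the cross term $2\E[M_nR_{n+1}]$, where you split $R_{n+1}$ into its drift, stochastic and projection pieces and invoke a conditional It\^o isometry to extract an $O(h^2)$ factor from the stochastic part. This is sound, but it is more machinery than the problem requires: writing $\Delta g_n := g(\Phi_h(X_{t_n})) - g(\Phi_h(Y_n))$ and using that $\Delta W_n$ is independent of $\F_{t_n}$, the single Young inequality $2\E[\Delta g_n \Delta W_n R_{n+1}] \le (\upsilon-2)\,h\,\E[|\Delta g_n|^2] + \tfrac{1}{\upsilon-2}\E[|R_{n+1}|^2]$ already yields an $O(h^3)$ remainder from $\|R_{n+1}\|_{L^2(\Omega;\R)}\le Ch^{3/2}$, with the first term absorbed by exactly the spare dissipation $-(\upsilon-2)h\,\E[|\Delta g_n|^2]$ you retained; this is what the paper does in one line, with no decomposition of $R_{n+1}$ at all. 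Your worry that ``crude Cauchy--Schwarz loses the order'' is accurate only if one takes $L^2$ norms of $M_n$ and $R_{n+1}$ separately; weighting Young so that the $h$ coming from $\E[|\Delta W_n|^2\mid\F_{t_n}]$ lands on the $|\Delta g_n|^2$ side avoids the $h^{-1/4}$ loss without touching the internal structure of $R_{n+1}$ --- so both routes run on the same mechanism (the slack $\upsilon>2$), yours just pays extra to make part of the cross term vanish exactly. One small omission to repair: in the critical case you should check, as the paper does in \eqref{critical_condition_implies_monotone_condition[1AS24]}, that $\tfrac{\alpha_2}{\sigma^2}\ge 4r+\tfrac12$ implies the hypothesis of Lemma \ref{lemma:f_g_monotonicity[1AS24]}, so that the constant $\upsilon>2$ you use throughout is actually available.
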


\begin{proof}
For brevity, for all $k \in \{ 0,1,2, ..., N \}$ we denote
\begin{equation}\label{denotion:Delta_f[1AS24]}
\begin{split}
     e_k :=  X_{ t_{k} } - Y_k, 
     \quad 
     \Delta \Phi_{h,k}^{X, Y} := \Phi_h(  X_{ t_{k} } ) - \Phi_h( Y_k ), 
     \quad
   & \Delta f_k^{\Phi_h, X ,Y} := f( \Phi_h(  X_{ t_{k} } ) ) - f( \Phi_h(Y_k) ),
    \\
    \Delta g_k^{\Phi_h, X ,Y} := g( \Phi_h(  X_{ t_{k} } )) - g( \Phi_h(Y_k) ), \quad
    & 
    \Delta \hat{g}_k^{\Phi_h, X , Y} :=  \hat{g}( \Phi_h(  X_{ t_{k} } ) ) -  \hat{g}( \Phi_h(Y_k) ).    
\end{split}
\end{equation}
Bearing \eqref{eq:ait_sahalia_gridpointVersion[1AS24]} and  \eqref{denotion:Delta_f[1AS24]} in mind and subtracting \eqref{eq:numeri_method[1AS24]} from \eqref{eq:ait_sahalia_gridpointVersion[1AS24]}
infer that for all $n \in \{0, 1,2, ...,N-1\}$, 
\begin{equation}\label{eq:e_n[1AS24]}
\begin{split}
    e_{n+1} - h \cdot \alpha_{-1}  (  X_{ t_{n+1} }^{-1} - Y_{n+1}^{-1}  ) 
    & =
    ( 1 + h \alpha_{1} ) \Delta \Phi_{h,n}^{X, Y}
     +
     h \Delta f_n^{\Phi_h, X, Y}   
     +
     \Delta g_n^{\Phi_h, X, Y}  \Delta W_n
    \\
    & \qquad  + 
    \tfrac{1}{2}  ( |\Delta W_n|^2 - h  ) \Delta \hat{g}_n^{\Phi_h, X, Y} 
    +
    R_{n+1}.  
\end{split}
\end{equation}
%
Squaring both sides of \eqref{eq:e_n[1AS24]} yields that
\begin{equation}\label{eq:square_of_V_n+1,1[1AS24]}
\begin{split}
    & \vert e_{n+1} - h \cdot \alpha_{-1}  (  X_{ t_{n+1} }^{-1} - Y_{n+1}^{-1}  ) \vert^2 \\
    & =
    ( 1 + h \alpha_{1} )^2   | \Delta \Phi_{h,n}^{X, Y} |^2     
     +  
     h^2  |  \Delta f_n^{\Phi_h, X, Y}   |^2    
    +
     |  \Delta g_n^{\Phi_h, X, Y} \Delta W_n   |^2    
    +
     \tfrac{1}{4}  |  ( |\Delta W_n|^2 - h ) \Delta \hat{g}_n^{\Phi_h, X, Y}   |^2  \\   
    &\quad
    +  
    | R_{n+1} |^2   
          + 
          2 h( 1 + h \alpha_{1} ) \Delta \Phi_{h,n}^{X, Y}  \Delta f_n^{\Phi_h, X, Y}    
          +  
          2 ( 1 + h \alpha_{1} ) \Delta \Phi_{h,n}^{X, Y}  \Delta g_n^{\Phi_h, X, Y} \Delta W_n  \\   
          & \quad
          +  
           ( 1 + h \alpha_{1} ) \Delta \Phi_{h,n}^{X, Y} ( |\Delta W_n|^2 - h ) \Delta \hat{g}_n^{\Phi_h, X, Y}     
          +  
          2 ( 1 + h \alpha_{1} )  \Delta \Phi_{h,n}^{X, Y}  R_{n+1}  \\    
              & \quad
              + 
              2 h \Delta f_n^{\Phi_h, X, Y} \Delta g_n^{\Phi_h, X, Y} \Delta W_n     
              + 
              h \Delta f_n^{\Phi_h, X, Y} ( |\Delta W_n|^2 - h ) \Delta \hat{g}_n^{\Phi_h, X, Y}    
              + 
              2 h \Delta f_n^{\Phi_h, X, Y}  R_{n+1} \\   
                   & \quad
                   +  
                   \Delta g_n^{\Phi_h, X, Y} \Delta W_n   ( |\Delta W_n|^2 - h ) \Delta \hat{g}_n^{\Phi_h, X, Y}    
                   + 
                   2 \Delta g_n^{\Phi_h, X, Y} \Delta W_n  R_{n+1}  \\  
                         & \quad
                         +  ( |\Delta W_n|^2 - h ) \Delta \hat{g}_n^{\Phi_h, X, Y} R_{n+1}.  
\end{split}
\end{equation}
Before proceeding further, one first notes that
\begin{equation}
\begin{aligned}
    & \vert e_{n+1} - h \cdot \alpha_{-1}  (  X_{ t_{n+1} }^{-1} - Y_{n+1}^{-1}  ) \vert^2 \\
    & = 
    \vert e_{n+1} \vert^2
    -
    2 h \alpha_{-1} e_{n+1}  (  X_{ t_{n+1} }^{-1} - Y_{n+1}^{-1}  ) 
    +
    h^2 \alpha_{-1}^2 |  X_{ t_{n+1} }^{-1} - Y_{n+1}^{-1}  |^2\\
    & =
    \vert e_{n+1} \vert^2
    -
    2 h \alpha_{-1} | e_{n+1} |^2  \int_0^1
    (-1) \cdot  |  Y_{n+1} + \theta (X_{ t_{n+1} } - Y_{n+1})  |^{-2}
    d \theta
    +
    h^2 \alpha_{-1}^2 | X_{ t_{n+1} }^{-1} - Y_{n+1}^{-1}  |^2\\
    & \geq
    \vert e_{n+1} \vert^2.
\end{aligned}
\end{equation}
In addition, 
the facts
\begin{equation}
    \E \big[\vert \Delta W_n \vert^2\big] = h,
    \quad
    \E \big[ (\Delta W_n)^3 \big] = 0,
    \quad
    \E \big[ \vert \Delta W_n \vert^4 \big] = 3 h^2 ,
\end{equation}
help us deduce
\begin{equation*}
    \begin{split}
        & 
        \E \Big[ \big|\Delta g_n^{\Phi_h, X, Y} \Delta W_n \big|^2 \Big] 
        =
        \E \Big[ \big|\Delta g_n^{\Phi_h, X, Y}  \big|^2 \Big]
        \cdot
        \E \Big[ \big| \Delta W_n \big|^2 \Big]
        = 
        h \E \Big[ \big|\Delta g_n^{\Phi_h, X, Y} \big|^2 \Big] , 
        \\ 
        & \E\Big[ \big| ( |\Delta W_n|^2 - h ) \Delta \hat{g}_n^{\Phi_h, X, Y} \big|^2 \Big]
        =
        \E\Big[ \big| ( |\Delta W_n|^2 - h ) \big|^2 \Big]
        \cdot
        \E\Big[ \big| \Delta \hat{g}_n^{\Phi_h, X, Y} \big|^2 \Big]
         = 
         2 h^2 \E\Big[  \big|  \Delta \hat{g}_n^{\Phi_h, X, Y}  \big|^2 \Big],
    \end{split}
\end{equation*}
and
\begin{align*}
    &  
         \E \Big[ \Delta \Phi_{h,n}^{X, Y} \Delta g_n^{\Phi_h, X, Y} \Delta W_n \Big] 
         = 0 ,
         \ \
         \E \Big[  \Delta \Phi_{h,n}^{X, Y}  \big( |\Delta W_n|^2 - h \big) \Delta \hat{g}_n^{\Phi_h, X, Y}  \Big]
         = 0 , 
         \\
         &
         \E \Big[ \Delta f_n^{\Phi_h, X, Y} \Delta g_n^{\Phi_h, X, Y} \Delta W_n \Big] 
         = 0, 
             \ \
         \E \Big[ \Delta f_n^{\Phi_h, X, Y} \big( |\Delta W_n|^2 - h \big) \Delta \hat{g}_n^{\Phi_h, X, Y} \Big] 
         = 0, \\
         &
          \E\Big[ \Delta g_n^{\Phi_h, X, Y} \Delta W_n   \big( |\Delta W_n|^2 - h \big) \Delta \hat{g}_n^{\Phi_h, X, Y}  \Big] = 0,
\end{align*}
where we noted the terms $\Delta f_n^{\Phi_h, X, Y}$, $\Delta g_n^{\Phi_h, X, Y}$, $\Delta 
\hat g_n^{\Phi_h, X, Y}$ and $\Delta \Phi_{h,n}^{X, Y}$ are $\mathcal{F}_{t_n}$-measurable by construction.
The Young inequality also implies that for all $n \in \{0,1,2,...,N-1 \}$,
    \begin{align}
        2 h^2 \alpha_{1} 
        \E\Big[ \Delta \Phi_{h,n}^{X, Y}  \Delta f_n^{\Phi_h, X, Y} \Big]
        & \leq 
        h^2 \alpha_1^2 \E\Big[ | \Delta \Phi_{h,n}^{X, Y} |^2 \Big]
          +
        h^2 \E\Big[ | \Delta f_n^{\Phi_h, X, Y} |^2 \Big] , 
        \\
        2 h \E\Big[ \Delta f_n^{\Phi_h, X, Y}  R_{n+1} \Big] 
        & \leq
        h^2 \E\Big[ | \Delta f_n^{\Phi_h, X, Y} |^2 \Big]
          +
        \E\Big[ | R_{n+1} |^2 \Big] ,
         \\
        \E\Big[ \big( |\Delta W_n|^2 - h \big) \Delta \hat{g}_n^{\Phi_h, X, Y} R_{n+1} \Big]
        &
        \leq
        \tfrac{1}{2} h^2  \E\Big[ | \Delta \hat{g}_n^{\Phi_h, X, Y} |^2 \Big]
          + 
        \E\Big[ |R_{n+1}|^2 \Big],
        \\
        2 \E\Big[ \Delta g_n^{\Phi_h, X, Y} \Delta W_n  R_{n+1} \Big]
        & \leq
        h ( \upsilon - 2 ) \E\Big[ | \Delta g_n^{\Phi_h, X, Y} |^2 \Big]
          +
        \tfrac{1}{ \upsilon - 2 } \E\Big[ |R_{n+1}|^2 \Big] ,
    \end{align}
with $\upsilon>2$ coming from Lemma \ref{lemma:f_g_monotonicity[1AS24]}.
Accordingly, by taking expectations on both sides of \eqref{eq:square_of_V_n+1,1[1AS24]} and utilizing the above estimates, one immediately arrives at
\begin{equation}\label{E(square of V_j+1)[1AS24]}
\begin{split}
     \E\big[ | e_{n+1} |^2 \big] 
    & \leq
     ( 1 + h \alpha_{1} )^2   
     \E\Big[ \big| \Delta \Phi_{h,n}^{X, Y} \big|^2 \Big]    
     +  
     h^2  \E\Big[ \big|  \Delta f_n^{\Phi_h, X, Y}   \big|^2 \Big]   
     +
     h \E\Big[ \big|\Delta g_n^{\Phi_h, X, Y} \big|^2 \Big] \\  
     & \quad
     +
     \tfrac{1}{2} h^2  \E\Big[ \big| \Delta \hat{g}_n^{\Phi_h, X, Y} \big|^2 \Big]     
     +  
    \E\Big[ \big| R_{n+1} \big|^2 \Big]  
          + 
          2 h( 1 + h \alpha_{1} ) \E\Big[ \Delta \Phi_{h,n}^{X, Y}  \Delta f_n^{\Phi_h, X, Y} \Big] \\   
          & \quad 
          +  
          2 ( 1 + h \alpha_{1} ) 
          \E\Big[ \Delta \Phi_{h,n}^{X, Y}  R_{n+1} \Big]    
              + 
              2 h \E\Big[ \Delta f_n^{\Phi_h, X, Y}  R_{n+1} \Big]    
                   + 
                   2 \E\Big[ \Delta g_n^{\Phi_h, X, Y} \Delta W_n  R_{n+1} \Big] \\ 
                         &  \quad + 
                         \E\Big[ \big( |\Delta W_n|^2 - h \big) \Delta \hat{g}_n^{\Phi_h, X, Y} R_{n+1} \Big] 
                         \\
    & \leq
    \big[( 1 + h \alpha_{1} )^2  + h^2 \alpha_1^2  \big] 
     \E\Big[ \big| \Delta \Phi_{h,n}^{X, Y} \big|^2 \Big]    
     +  
     3 h^2  \E\Big[ \big|  \Delta f_n^{\Phi_h, X, Y}   \big|^2 \Big]   
     +
     h(\upsilon-1) \E\Big[ \big|\Delta g_n^{\Phi_h, X, Y} \big|^2 \Big] \\  
     & \quad
     +
     h^2  \E\Big[ \big| \Delta \hat{g}_n^{\Phi_h, X, Y} \big|^2 \Big]     
     +  
    (3+\tfrac{1}{\upsilon-2})\E\Big[ \big| R_{n+1} \big|^2 \Big]  
    + 
    2 h \E\Big[ \Delta \Phi_{h,n}^{X, Y}  \Delta f_n^{\Phi_h, X, Y} \Big] \\   
    & \quad 
    +  
    2 ( 1 + h \alpha_{1} ) 
    \E\Big[ \Delta \Phi_{h,n}^{X, Y}  R_{n+1} \Big]  
    \\
    &  = 
    \big[( 1 + h \alpha_{1} )^2  + h^2 \alpha_1^2  \big]   
     \E\Big[ \big| \Delta \Phi_{h,n}^{X, Y} \big|^2 \Big]    
     +
     (3+\tfrac{1}{\upsilon-2}) \E\Big[ \big| R_{n+1} \big|^2 \Big]  
     \\
     & \quad
      +
     2 h \bigg(
         \tfrac{\upsilon - 1}{2} \E\Big[ \big|\Delta g_n^{\Phi_h, X, Y} \big|^2 \Big]  
         +
         \E\Big[ \Delta \Phi_{h,n}^{X, Y}  \Delta f_n^{\Phi_h, X, Y} \Big]  
     \bigg) \\
     & \quad
      +
     h^2  \bigg(
         \E\Big[ \big| \Delta \hat{g}_n^{\Phi_h, X, Y} \big|^2 \Big]     
         +
         3 \E\Big[ \big|  \Delta f_n^{\Phi_h, X, Y}   \big|^2 \Big]   
     \bigg) \\
    & \quad
    +  
    2 ( 1 + h \alpha_{1} ) 
    \E\Big[ \Delta \Phi_{h,n}^{X, Y}  R_{n+1} \Big].  
\end{split}
\end{equation}
Furthermore, we note that all conditions required by Lemma \ref{lemma:f_g_monotonicity[1AS24]} are fulfilled, as
\begin{equation}\label{critical_condition_implies_monotone_condition[1AS24]}
    \tfrac{\alpha_2}{\sigma^2} 
    \geq
    4 r + \tfrac12
    =
    \tfrac{r}{8} + \tfrac{31}{8}(r - 1) + \tfrac{35}{8}
    >
    \tfrac{1}{8}(r + 35)
    >
    \tfrac{1}{8}\big(r + 2 + \tfrac{1}{r}\big).
\end{equation}
Therefore, using Lemma \ref{lemma:f_g_monotonicity[1AS24]} and \eqref{eq:assump.f-g(Phi_x)-f-g(Phi_y)[1AS24]} shows
\begin{equation}\label{iterateion[1AS24]}
\begin{aligned}
    \E\big[ | e_{n+1} |^2 \big] 
& 
\leq
    \big[( 1 + h \alpha_{1} )^2 + h^2 \alpha_1^2 + 2 L h + 3 L_3 h \big]   
     \E\Big[ \big| \Delta \Phi_{h,n}^{X, Y} \big|^2 \Big]   
     +
     (3+\tfrac{1}{\upsilon-2}) \E\Big[ \big| R_{n+1} \big|^2 \Big] 
     \\
     & \quad
    +  
    2 ( 1 + h \alpha_{1} ) 
    \E\Big[ \Delta \Phi_{h,n}^{X, Y}  R_{n+1} \Big]
 \\
& =
    \big[( 1 + h \alpha_{1} )^2 + h^2 \alpha_1^2  + 2 L h + 3 L_3 h \big]   
     \E\Big[ \big| \Delta \Phi_{h,n}^{X, Y} \big|^2 \Big]   
     +
     (3+\tfrac{1}{\upsilon-2}) \E\Big[ \big| R_{n+1} \big|^2 \Big] 
     \\
     & \quad
    +  
    2 ( 1 + h \alpha_{1} ) 
    \E\Big[ 
        \Delta \Phi_{h,n}^{X, Y} 
        \cdot
        \E\big[ R_{n+1}| \mathcal{F}_{t_n} \big]  
    \Big],
\end{aligned}
\end{equation}
where we also used the property of the conditional expectation in the last equality.
Further, utilizing the Young inequality gives
\begin{equation}
\begin{aligned}
    \E\big[ | e_{n+1} |^2 \big] 
    & \leq
    \big[( 1 + h \alpha_{1} )^2 + h^2 \alpha_1^2 + 2 L h + 3 L_3 h \big]   
     \E\Big[ \big| \Delta \Phi_{h,n}^{X, Y} \big|^2 \Big]    
     +
     (3+\tfrac{1}{\upsilon-2}) \E\Big[ \big| R_{n+1} \big|^2 \Big]  
     \\
     & \quad
    +  
    h ( 1 + h \alpha_{1} ) 
    \E\Big[ 
        \big\vert \Delta \Phi_{h,n}^{X, Y} \big\vert^2 
    \Big]
    +
    \tfrac{1 + h \alpha_{1}}{h}
    \E\Big[ 
        \big\vert \E\big[ R_{n+1}| \mathcal{F}_{t_n} \big] \big\vert^2
    \Big]  
    \\
    & =
    \big[( 1 + h \alpha_{1} )^2 + h^2 \alpha_1^2 + 2 L h + 3 L_3 h + h(1 + h \alpha_1) \big]   
     \E\Big[ \big| \Delta \Phi_{h,n}^{X, Y} \big|^2 \Big]    
     \\
     & \quad
     +
     (3+\tfrac{1}{\upsilon-2}) \E\Big[ \big| R_{n+1} \big|^2 \Big]  
    +
    \tfrac{1 + h \alpha_{1}}{h}
    \E\Big[ 
        \big\vert \E\big[ R_{n+1}| \mathcal{F}_{t_n} \big] \big\vert^2
    \Big].  
\end{aligned}
\end{equation}
In view of 
{\color{black}\eqref{eq:assump.Phi_x-Phi_y[1AS24]} and Lemma \ref{lem. R_n estimates [1AS24]}}, 
we obtain
\begin{equation}
\begin{aligned}
    \E\big[ | e_{n+1} |^2 \big] 
    & \leq
    \big[( 1 + h \alpha_{1} )^2 + h^2 \alpha_1^2 + 2 L h + 3 L_3 h + h(1 + h \alpha_1) \big]   
     \E\big[ | e_n |^2 \big]    
     \\
     & \quad
     +
     (3+\tfrac{1}{\upsilon-2}) C h^3  
    +
    \tfrac{1 + h \alpha_{1}}{h}
    C h^4
    \\
    & \leq
    (1 + C h) \E\big[ | e_n |^2 \big]
    +
    C h^3.  
\end{aligned}
\end{equation}
By iteration and the observation of $e_0 = 0$, we finally arrive at
\begin{equation}
\begin{aligned}
    \E \big[ \vert e_{n+1} \vert^2 \big]
    & \leq
    (1+ C h)^{ n + 1} \E\big[ | e_0 |^2 \big]
    +
    C h^3 \sum_{i=0}^{ n } (1+ C h)^i\\
    & \leq
    e^{ C T } \E\big[ | e_0 |^2 \big]
    +
    C h^2 T e^{C T}
    \leq
    C h^2.
\end{aligned}
\end{equation}
The proof is thus completed.

\end{proof}


\section{Numerical experiments}\label{section:numerical experiments[1AS24]}

In this section, some numerical experiments are presented to verify the previous theoretical findings,
for the approximation of the A\"{i}t-Sahalia type model
\begin{equation}
\left\{
\begin{array}{ll}
      dX_t = (\alpha_{-1}X^{-1}_t - \alpha_0 + \alpha_1 X_t - \alpha_2 X^{r}_t)dt + \sigma X^{\rho}_t dW_t,\quad t \in [0,T],\ \ T=1, 
     \\
      X_0 = 0.5.
\end{array}
 \right.
\end{equation}
To be more specific, we conduct numerical experiments by implementing the following semi-implicit projected Milstein method (SIPMM):
\begin{equation}
\begin{split}
       Y_{n+1}
      & =
       \Phi_h (Y_n) 
       +  (\alpha_{-1} Y_{n+1}^{-1} - \alpha_0 + \alpha_1 \Phi_h (Y_n) - \alpha_2 (\Phi_h(Y_n) )^r   ) h
       + \sigma( \Phi_h (Y_n) )^\rho \Delta W_n  \\
       & \qquad \qquad \qquad \qquad
       + \tfrac{1}{2} \rho \sigma^2 ( \Phi_h (Y_n) )^{ 2 \rho - 1 } ( |\Delta W_n|^2 - h  )   ,
       \ \
       n \in \{0,1,2,...,N-1\} ,
\end{split}
\end{equation}
where we take
$
    \Phi_h (x) = \min \{ 1 , h^{ - \frac{1}{2r-2} } |x|^{-1} \} x,
    \:
    x \in D,
$
so that the mapping coincides with Example \ref{example2[1AS24]} with $q=\tfrac{1}{2r-2}$.
Three sets of parameters are carefully chosen to meet the required conditions required by Theorem \ref{thm(main):Order_one[1AS24]}.

~

\textbf{Example 1} (non-critical case $ r + 1 > 2 \rho$): $\alpha_{-1} = \tfrac{3}{2} , \alpha_0 = 2 , \alpha_1 = 1, \alpha_2 = 13, \sigma = 1, r = 4 , \rho = 1.5$;

~

\textbf{Example 2} (critical case $ r = 3,  \rho = 2$): $\alpha_{-1} = \tfrac{3}{2} , \alpha_0 = 2 , \alpha_1 = 1, \alpha_2 = 13, \sigma = 1, r = 3 , \rho = 2$;

~

\textbf{Example 3} (critical case $ r=2, \rho = 1.5$): 
$\alpha_{-1} = \tfrac{3}{2} , \alpha_0 = 2 , \alpha_1 = 1, \alpha_2 = 13, \sigma = 1, r = 2 , \rho = 1.5$.


\begin{figure*}
\centering
\includegraphics[scale=0.7]{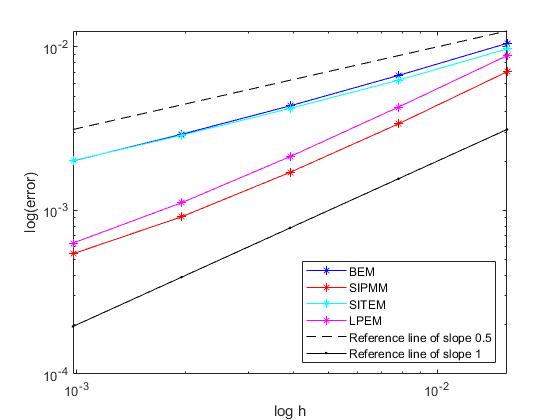}
\caption{Mean-square convergence rates for Example 1}  
\label{Fig.case1[1AS24]}
\end{figure*}

\begin{table}[htp]
\color{black}
	\centering
	\setlength{\tabcolsep}{8mm}
        \caption{\color{black}RMSE \& Time costs for BEM and SIPMM over $10^4$ Brownian paths in Example 1}\label{tab:time_costs[1AS24]}
	\begin{tabular}{c c c}
		\toprule[2pt]
		  &  BEM & SIPMM\\
		\midrule 
            $h=2^{-6}$ & err = 0.0105, time = 7.246s & err = 0.0070, time = 6.752s\\
		  $h=2^{-7}$ & err = 0.0067, time = 7.644s & err = 0.0034, time = 6.653s\\
            $h=2^{-8}$ & err = 0.0043, time = 9.386s & err = 0.0017, time = 6.910s\\
            $h=2^{-9}$ & err = 0.0029, time = 11.304s & err = 0.0009, time = 7.545s \\
            $h=2^{-10}$ & err = 0.0020, time = 13.897s & err = 0.0005, time = 8.114s\\
		\bottomrule [2pt]
	\end{tabular}
	\vspace{2pt}
\end{table}

\begin{figure*}
\centering
\includegraphics[scale=0.7]{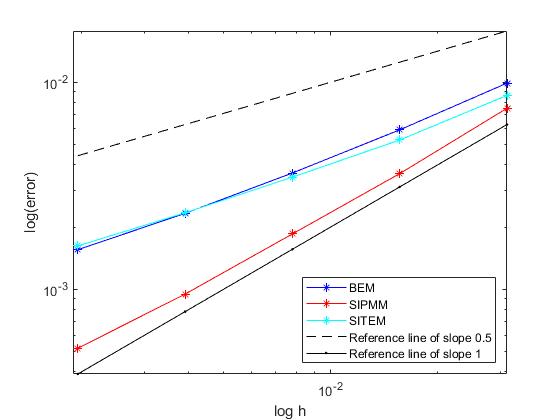}
\caption{Mean-square convergence rates for Example 2}  
\label{Fig.case2[1AS24]}
\end{figure*}

\begin{figure*}
\centering
\includegraphics[scale=0.7]{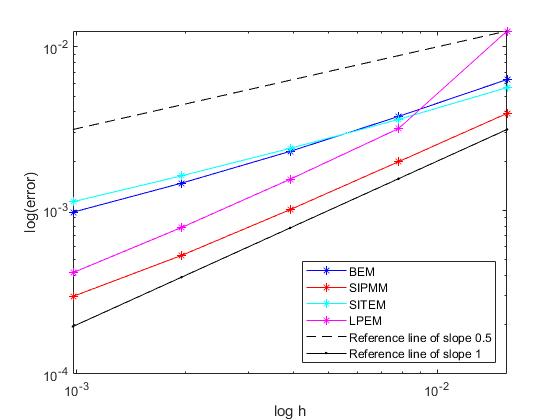}
\caption{Mean-square convergence rates for Example 3}  
\label{Fig.case3[1AS24]}
\end{figure*}

~

The backward Euler method (BEM) investigated in \cite{Wang:theta_mean-square_BIT20}, and the semi-implicit tamed Euler method (SITEM) proposed in \cite{Liu_Cao_Wang:AS_unconditionally_arkiv24} for the model \eqref{eq:ait-sahalia_general[1AS24]},
proved to be strongly convergent with order 0.5, are also implemented for comparison.
Numerical approximations produced by BEM with a fine step-size $h_{\text{exact}} = 2^{-15} $ are identified with ``exact" solutions, while various step-sizes $h=2^{-i},i=6,7,8,9,10$ are used for numerical approximations.
These two schemes together with our method are tested for the above three examples.
{\color{black}In addition, for the non-critical case in Example 1 and the critical case in Example 3, we conduct experiments using the numerical method proposed in \cite{Chassagneux_J._M.:financialSDEs_non-Lipschitz_SIAM16}, which combines a Lamperti-type transformation with a projected Euler-Maruyama method (LPEM).}
The expectation appearing in the mean-square error is approximated by calculating averages over $10000$ paths in the following numerical tests.


In {\color{black}Fig.\ref{Fig.case1[1AS24]}-Fig.\ref{Fig.case3[1AS24]}}, the mean-square convergence rates of {\color{black}BEM, SIPMM, SITEM and LPEM}
are depicted on a log-log scale.
There
one can easily see that the mean-square convergence rates of both BEM and SITEM are close to $0.5$, as opposed to a convergence rate close to $1$ for SIPMM. This can be detected more transparently from Table \ref{table1},
which confirms the theoretical results of Theorem \ref{thm(main):Order_one[1AS24]}. 
{\color{black}Besides,  a strong convergence of order 1 is observed for LPEM applied to Example 1 and Example 3.
Nevertheless, a mean square convergence rate of only order $0.5$ is achieved for the non-critical case 
$r + 1 > 2 \rho$ theoretically (see \cite[Corollary 4.6]{Chassagneux_J._M.:financialSDEs_non-Lipschitz_SIAM16} for more details).}

{\color{black}
Moreover, 
we present in Table \ref{tab:time_costs[1AS24]} 
the root mean square errors (RMSE) and time costs between BEM and SIPMM for Example 1 to illustrate the advantages of the proposed SIPMM. 
The numerical results clearly demonstrate that SIPMM exhibits significant reductions in both errors and time costs compared to BEM.
}

\begin{table}[htp]
	\centering
 \footnotesize
	\setlength{\tabcolsep}{7mm}
        \caption{A least square fit for the convergence rate $q$}\label{tab:least_squares_fit[1AS24]}
	\begin{tabular}{m{2cm}<{\centering} m{2.5cm}<{\centering} m{2.5cm}<{\centering} m{2.5cm}<{\centering} }
		\toprule[2pt]
		  & BEM & SIPMM & SITEM \\
		\midrule 
		 Example 1 
         & $q=0.5970$, $resid=0.0480$ 
         & $q=0.9282$, $resid=0.1300$
         & $q=0.5654$, $resid=0.0502$ 
         \\
		 Example 2 
         & $q=0.6264$, $resid=0.0580$ 
         & $q=0.9160$, $resid=0.0970$ 
         & $q=0.5657$, $resid=0.0422$ 
         \\
          Example 3
         & $q=0.6743$, $resid=0.0714$ 
         & $q=0.9338$, $resid=0.0668$ 
         & $q=0.5780$, $resid=0.0511$ 
         \\
		\bottomrule [2pt]
	\end{tabular}
	\vspace{2pt}
 \label{table1}
\end{table}



\section*{Declarations}

\subsection*{Ethical Approval} 
Not applicable

\subsection*{Availability of supporting data}
The authors confirm that the data supporting the findings of this study are available within the article.

\subsection*{Competing interests}
The authors declare no competing interests.

\subsection*{Funding}
This work is supported by the National Natural Science Foundation of China (Grant Nos. 12071488, 12371417, 11971488), the Natural Science Foundation of Hunan Province (Grant No.2020JJ2040).

\subsection*{Authors' contributions} 
All the authors contribute equally to this work.

\subsection*{Acknowledgments} 
Not applicable

\vskip6mm
\bibliography{ref}

\end{document}